\def\v{{\bf v}}
\def\r{\mathcal R}
\def\R{\mathbb R}
\def\C{\mathbb C} 
\def\F{\mathbb F}
\def\H{\mathbb H}
\def\z{{\bf z}}
\def\w{{\bf w}}
\def\V{\mathbb V}
\def\X{\mathbb X}
\def\A{\mathbb A}
\def \ch{{\bf H}_{\C}}
\def \h{{\bf H}_{\H}}
\def\fh{{\bf H}_{\mathbb F}}
\def\R{\mathbb R}
\def\V{\mathbb V}
\def\p{\mathbf p}
\newcommand{\SU}{\mathrm{SU}}
\newcommand{\GL}{\mathrm{GL}}
\newcommand{\Sp}{\mathrm{Sp}}
\def \a{\mathcal A}
\def\P{\mathbb P}
\def \ab {{\bf a}_B}
\def \aa {{\bf a}_A}
\def \ra {{\bf r}_A}
\def \rb {{\bf r}_B}
\def \xb{{\bf x}_{k, B}}
\def \yab {{\bf a}_{B'}}
\def \yaa {{\bf a}_{A'}}
\def \yra {{\bf r}_{A'}}
\def \yrb {{\bf r}_{B'}}
\def \yxb {{\bf x}_{k, B'}}
\def \a {{\bf a}}
\def \r  {{\bf r}}
\def \x  {{\bf x}}
\def \tr  {{\rm{tr}}}
\def \rtr  {{\rm{tr}}_{\mathbb R}}
\newcommand{\fX}{\mathfrak{X}}
\newcommand{\hm}{{\mathrm{Hom}}}
\newtheorem{theorem}{Theorem}[section]
\newtheorem{lemma}[theorem]{Lemma}
\theoremstyle{definition}
\newtheorem{definition}[theorem]{Definition}
\theoremstyle{remark}
\newtheorem{remark}[theorem]{Remark}
\numberwithin{equation}{section}
\theoremstyle{plain}
\newtheorem{acknowledgement}{Acknowledgement}
\newtheorem{corollary}[theorem]{Corollary}
\numberwithin{equation}{section}
\newcommand{\defref}[1]{Definition~\ref{#1}}
\newcommand{\secref}[1]{Section~\ref{#1}}
\newcommand{\thmref}[1]{Theorem~\ref{#1}}
\newcommand{\lemref}[1]{Lemma~\ref{#1}}
\newcommand{\corref}[1]{Corollary~\ref{#1}}
\newcommand{\eqnref}[1]{~{\textrm(\ref{#1})}}
\begin{document}

\title[Local coordinates for complex and quaternionic hyperbolic pairs]{Local coordinates for complex and quaternionic hyperbolic pairs}
 \author[Krishnendu Gongopadhyay  \and Sagar B. Kalane]{Krishnendu Gongopadhyay \and Sagar B. Kalane
 }
\address{Indian Institute of Science Education and Research (IISER) Mohali,
 Knowledge City,  Sector 81, S.A.S. Nagar 140306, Punjab, India}
\email{krishnendug@gmail.com, krishnendu@iisermohali.ac.in}
\address{Indian Institute of Science Education and Research (IISER) Pune, 
Dr. Homi Bhabha Road,
Pashan, Pune 411008, India} 
\email{sagark327@gmail.com}
 \thanks{Gongopadhyay acknowledges partial support from SERB-DST MATRICS project: MTR/2017/000355. Kalane is supported by  IISER Pune Institute post doctoral fellowship.}
\date{\today}
 \subjclass[2010]{Primary 37C15; Secondary  51M10, 15B57, 20E45}
\keywords{complex hyperbolic space, quaternionic hyperbolic space, loxodromic elements, character variety, surface group,  traces}
\begin{abstract}
Let $G(n)={\rm Sp}(n,1)$ or ${\rm SU}(n,1)$. We classify conjugation orbits of generic pairs of loxodromic elements in $G(n)$. Such pairs, called `non-singular', were introduced by Gongopadhyay and Parsad for ${\rm SU}(3,1)$. We extend this notion and classify $G(n)$-conjugation orbits of such elements in arbitrary dimension. For $n=3$, they give a subspace that can be parametrized using a set of coordinates whose local dimension equals the dimension of the underlying group. We further construct twist-bend parameters to glue such representations and obtain local parametrization for generic representations of the fundamental group of a closed (genus $g \geq 2$)  oriented surface into $G(3)$.
\end{abstract}
\maketitle

\section{Introduction}\label{intro}
  Let $\F=\H$ or $\C$, where  $\H$ denotes the division ring of Hamilton's quaternions. Let $G(n)$, or simply $G$, denote the group  $\SU(n, 1; \F)$ that acts as the isometry group of the $\F$-hyperbolic space $\fh^n$. Usually we denote $\SU(n, 1; \C)=\SU(n,1)$, and $\SU(n, 1; \H)=\Sp(n,1)$.  This paper concerns the problem of classifying $G$-conjugation orbits of loxodromic pairs in $G \times G$. The $G$-conjugation orbit space can be identified with the character variety or the deformation space   $\fX({\mathrm F}_2, G)=\hm({\mathrm F}_2, G)/ G$, where $G$ acts on $\hm({\mathrm F}_2, G)$ by inner automorphisms and  ${\mathrm F}_2=\langle x, y \rangle$ is the free group with generators $x$ and $y$.  In \cite{gk2}, we obtained  a  local parametrization of  a representation $\rho: {\mathrm F}_2 \to \Sp(n,1)$, where both $\rho(x)$ and $\rho(y)$ are semisimple.  When $G=\SU(n,1)$, for \hbox{loxodromic} pairs such a local parametrization is available from the work \cite{gp3}.  A main idea used in these works was to project fixed points of a pair of loxodromic elements onto the moduli space of $G$-congruence classes of ordered tuple of points on $\partial \fh^n$. Counting eigenvalues without multiplicities, a loxodromic element of $G$ has precisely two null eigenspaces and $n-1$ lines spanned by eigenvectors of positive norm. In \cite{gk2, gp3}, the $n-1$ lines spanned by these positive-definite eigenvectors were projected to the boundary $\partial \fh^n$. This associated tuple of points on $\partial \fh^n$ along with the spectrum data essentially classified the pair.   The difficulty to generalize the work from the complex hyperbolic isometries to the quaternionic hyperbolic set up arose due to the fact that the eigenvalues of an element in $\Sp(n,1)$ are not uniquely defined, but they appear in similarity classes.  So the conjugacy invariants available in $\Sp(n,1)$ are not well-behaved unlike their  complex counterpart. We avoided this difficulty by associating a combination of spatial and numerical invariants to obtain the local parametrizations in \cite{gk2}, \cite{gk1}.

Following the classical construction of the Fenchel-Nielsen coordinates on the Teichm\"uller space, especially for loxodromic representations in low dimensions,  one may like to have the local (real) dimension (or the `degrees of freedom') of the coordinates to add up to the  dimension of $\fX({\mathrm F}_2, G)$, which is the same as the (real) dimension of the Lie group $G$. We call such a parameter system as `Fenchel-Nielsen type'. The coordinate systems obtained in \cite{gk2} and \cite{gp3}, however, do not add up to the dimensions of the underlying group even for $n=3$. In general, it is unlikely to obtain a Fenchel-Nielsen type parameter system for arbitrary pairs as shown in \cite{gl}. However, it may be possible to associate Fenchel-Nielsen type coordinates (at least locally) to special subsets of the character variety. Parker and Platis obtained such a parameter system for  irreducible loxodromic representations $\fX({\mathrm F}_2,  \SU(2,1))$. In \cite{gk1}, we obtained Fenchel-Nielsen type coordinates for irreducible loxodromic  representations in 
$\fX({\mathrm F}_2, \Sp(2,1))$.    For generic loxodromic representations in $\fX({\mathrm F}_2, \SU(3,1))$, called `non-singular',  such a system of parameters is obtained from the work \cite{gp2}.  In \cite[Section 7.2]{gp3}, a version of non-singularity was defined for generic loxodromic pairs in $\SU(n,1)$. It  was proved that such a pair projects to a unique point on the moduli space of $\SU(n,1)$ congruence classes of ordered tuples of boundary points.

 In this paper, we have  extended the notion of non-singular pairs to  $\SU(n,1; \F)$ and have classified such pairs by associating a system of parameters. The associated numerical invariants  are comparable to the complex cross ratios used in  \cite{cugu1}. These invariants are obtained directly from the spectrum data of the pairs.  However, in the quaternionic setting, the quaternionic versions of the cross ratios are not enough to classify such pairs. A set of spatial parameters, called ``projective points", needs to be associated.  When one fixes the numerical invariants, these spatial parameters come from the fiber over the space of the numerical invariants.  This generalizes the parametrization obtained in \cite[Corollary  1.5]{gk1}. Though unlike the $\Sp(2,1)$ case, we do not know the precise domains of the numerical invariants. Restricting the  classification to $\SU(3,1; \F)$,  we obtain a Fenchel-Nielsen type parameter system for generic loxodromic representations in  $\fX({\mathrm F}_2, \SU(3,1; \F))$. As an application, we obtain local parametrization for generic representations of a closed genus $g$ surface group into $\Sp(3,1)$, where $g \geq 2$. This extends the work in \cite{gp2} over the quaternions.  

 Now, we define the `generic' representations which are investigated in this paper and  describe the results obtained.  Let $\F^{n,1}$ be the vector space $\F^{n+1}$ equipped with a non-degenerate Hermitian form $\langle.,. \rangle$ of signature $(n,1)$.  Then $\fh^n$ is the projectivization of the set of  vectors $v$ such that $\langle v, v \rangle<0$.  The boundary $\partial \fh^n$ is the projectivization of the null vectors. The projection of a vector $\v$ is denoted by $v$ on the projective space. A \emph{$k$-dimensional totally geodesic subspace} of $\fh^n$, that is also called an a \emph{$\F^k$-plane}, is the projectivization of a copy of $\F^{k,1}$ in $\F^{n,1}$. An $\F^1$-plane is simply called an \emph{$\F$-line}, and an $\F^{n-1}$-plane is simply called an \emph{$\F$-hyperplane}.   The boundary of an $\F^k$-plane is called an  \emph{$\F^k$-chain}.  
A point $v$ on the projective space is \emph{polar} to a $\F^{n-1}$-plane $C$ if the lift of $C$ in $\F^{n,1}$ is the orthogonal complement of $\v$. In particular, we must have $\langle \v, \v \rangle >0$. The positive vector $\v$ is polar to a $\F^{n-1}$-chain $L$ if $L$ is the boundary of a $\F^{n-1}$-plane $C$ that is polar to $v$.

 An element $A$ in $G$ is called hyperbolic (or loxodromic) if it has exactly two fixed points on $\partial \fh^n$. Such an $A$ has two eigenvalue classes represented by $re^{i \theta}$, $r^{-1} e^{i \theta}$, $r<1$, $\theta \in [-\pi, \pi]$, and rest of the $n-1$ classes are represented by $e^{i \phi_1}, \ldots, e^{i \phi_{n-1}}$, $\phi_i \in [-\pi, \pi]$.  An element $A$ in $G$ is \emph{regular} if the eigenvalue classes are mutually disjoint. 

Let $A$ be a  regular hyperbolic element. We denote by ${\bf a}_A$, ${\bf r}_A$ the null eigenvectors of $A$ corresponding to the classes $re^{i \theta}$ and $r^{-1}e^{i \theta}$ respectively.  Let ${ \bf x}_{j,A}$, $1 \leq j \leq n-1$, be the eigenvector to $e^{i \phi_j}$. The eigenvector ${ \bf x}_{j,A}$ is positive-definite, i.e. $\langle { \bf x}_{j,A}, { \bf x}_{j,A}\rangle>0$ for $1 \leq j \leq n-1$.  Note that $A$ fixes $x_{j, A}$ on $\F \P^n$.  For a hyperbolic (or loxodromic) element $A$ in $\SU(n,1)$, the characteristic polynomial determines the conjugacy class, and the traces $\tr (A^j)$,  $1 \leq j \leq [\frac{n+1}{2}]$, determine the coefficients of the characteristic polynomial. For $A \in \Sp(n,1)$, there is a natural complex representation $A_{\C}$ of $A$ in $\GL(2(n+1), \C)$. The tuple of the coefficients of the characteristic polynomial of $A_{\C}$ gives the \emph{real trace} of $A$, denoted by $\rtr(A)$.

In this paper we use the following:
\begin{definition} 
An element $A \in \Sp(n, 1)$ as \emph{loxodromic} if it is hyperbolic and having no real eigenvalue. 
\end{definition}
For a loxodromic $A$ in $\Sp(n, 1)$,  the real trace  $\rtr(A)$ is an element of $\R^{n+1}$. March\'e and Will in \cite{mw} have used flags in $\ch^2 \cup \partial \ch^2$ to give a set of local coordinates to generic elements on the  ${\rm PU}(2, 1)$ character variety of the fundamental group of a  punctured oriented surface. Taking motivation from their work, we use certain flags to define the generic pairs that we have investigated in this paper. 
\begin{definition} 
A \emph{flag} is a triple $(p, C, \Pi ),$ where $p$ is a point on $\Pi \cap \partial \fh^n$, $C$ is a $\F$-line containing $p$ on the boundary of $C$, $\Pi$ is a $\F$-hyperplane, and $C \subset \Pi$.

 Thus a positive point $x$  on $\F \P^n$ along with a boundary point $p$ and an $\F$-line $C$,  define a flag. 
\end{definition}

\begin{definition} Given a loxodromic element $A$, we  associate canonical flags to $A$ given by $F_{j, A}=(a_A, L_A, W_{j, A})$, $1 \leq j \leq n-1$, where $L_A$ is the line joining $a_A$ and $r_A$, and $W_{j, A}$ is the projectivization of $\x_{j, A}^{\perp}$.  \end{definition} 

\begin{definition}
Two flags $(p, C, \Pi)$ and $(p', C', \Pi')$ are said to form a  \emph{generic pair} if the following holds. 

\medskip (i) $p$ does not belong to the boundary of $C'$, $p'$ does not belong to the boundary of  $C$. 

\medskip (ii) $\partial C$ is disjoint from $\partial \Pi'$ and $\partial C'$ is disjoint from $\partial \Pi$. 
\end{definition} 
\begin{definition}\label{nsl} 
 Let $A$, $B$ be two loxodromic elements in ${\rm SU}(n,1; \F)$.  The pair $(A, B)$ is called \emph{weakly non-singular}  if 
 \begin{enumerate}
  \item { $A$ and $B$ does not have a common fixed point.   }

\item The elements $A$ and  $B$ are regular.

\item $(n-2)$ of the canonical flags of $A$ form generic pairs with $(n-2)$ of  the canonical flags of $B$. 
\end{enumerate} 
\end{definition} 

\begin{definition} \label{nsl1}
A pair $(A, B)$ of loxodromic elements in $\SU(n, 1; \F)$ is called \emph{non-singular} if it is weakly non-singular and the null fixed points of $A$ and $B$ do not belong to the boundary of the same proper totally geodesic hyperplane. We note that the last condition of non-singularity implies that $(A, B)$ is necessarily irreducible, i.e. $\langle A, B \rangle$ neither fixes a point, nor preserves a proper $\F^k$-plane. 
\end{definition} 

The above definition generalized the `non-singular' pairs defined in \cite{gp2}.  The terminology `non-singularity' in \cite{gp2} was motivated from the propoerty that the mixed cross ratios were non-zero for such a pair. Similar consideration are implicit in the above definition as well. 

 \medskip Corresponding to the boundary fixed points of $(A, B)$, we already have the conjugacy invariants given by the cross ratios and the angular invariants.  We recall here that for four distinct points $z_1$, $z_2$, $z_3$ and $z_4$ in $\partial \fh^n$, the \emph{usual cross-ratio} is defined by: 
\begin{equation}\label{ecr} \X(z_1, z_2, z_3, z_4)={\langle {\bf z}_3, {\bf z}_1 \rangle \langle {\bf z}_3, \bf z_2 \rangle}^{-1} { \langle {\bf z}_4, {\bf z}_2\rangle \langle   {\bf z}_4, {\bf z}_1 \rangle^{-1}},\end{equation}
where ${\bf z_i}$ is a lift of $z_i$ in $\F^{n,1}$. These cross ratios were introduced by Kor\'anyi and Reimann for points on $\partial \ch^n$ in \cite{kr}, also see \cite{gold}. Platis has investigated quaternionic versions of these cross ratios in \cite{platis}. The complex cross ratios are independent of the chosen lifts of $z_i$ and are conjugacy invariants. However, the quaternionic cross ratios are not independent of the chosen lifts of the points; therefore, they are not well-defined conjugacy invariants.  But similarity classes of the cross ratios are independent of the chosen lifts.  Accordingly,  $\Re(\X)$ and $|\X|$ are the conjugacy invariants associated to the quaternionic cross ratios. Also unlike the complex case, quaternionic cross ratios do not classify a quadruple of boundary points up to $\Sp(n,1)$-congruence.

It can be seen that modulo the symmetric group action on the four boundary fixed points of $(A, B)$, only three such cross ratios are needed to determine the others under the permutation. We denote these cross ratios by:
$$\X_1(A, B)=\X(a_A, r_A, a_B, r_B), ~\X_2(A, B)=\X(a_A, r_B, a_B, r_B), ~\X_3(A, B)=\X(r_A, r_B, a_B, a_A).$$
 Platis proved in \cite{platis} that for $n \geq 3$, the set of  cross ratios $(\X_1, \X_2, \X_3)$ of quadruple of points on $\partial \fh^n$ form a five dimensional semi-algebraic subset of $\R^5$. 
 
In the quaternionic set up, the Cartan's angular invariant associated to a triple $(z_1, z_2, z_3)$ on $\h^n \cup \partial \h^n$ is given by the following, see \cite{ak}, \cite{cao}, 

\begin{equation} \label{ainv} \A(z_1,~z_2,~z_3)=\arccos\frac{\Re(-\langle \z_1, \z_2, \z_3\rangle)}{|\langle \z_1, \z_2, \z_3\rangle|}, \end{equation} 
where  $\langle \z_1, \z_2, \z_3\rangle=\langle \z_1, \z_2\rangle\langle \z_2, \z_3\rangle\langle \z_3, \z_1\rangle$. The quaternionic angular invariants are independent of the chosen lifts of $z_i$ and are conjugacy invariants. So, there are angular invariants that correspond to the quadruple of the boundary fixed points.  We denote these angular invariants by    
$$\A_1(A, B)=\A(a_A, r_A, a_B), ~\A_2(A, B)=\A(a_A, r_A, r_B), ~ \A_3(A, B)=\A(r_A, a_B, r_B).$$
 In \cite{cao}, Cao proved that an ordered quadruple of  points on $\partial \h^n$ is determined up to $\Sp(n,1)$ congruence by the similarity classes of the cross ratios and the above angular invariants. 

In order to classify a weakly non-singular pair $(A, B)$, we would require more invariants. For this,  we extend the above definition of the cross ratio by taking one (or more) of the points $z_i$ to be points on $\F \P^n$ corresponding to the  positive definite eigenvectors of $A$ and $B$. We call such invariants as \emph{generalized cross ratios}. 
We also define generalized Goldman's eta invariants that corresponds to two boundary points and a hyperplane, see \cite[Section 7.3.1]{gold}.  The set of numerical invariants considered here comes from the Gram matrix associated to the pair $(A, B)$. For $(A, B)$ in $\Sp(n,1)$,  it is the similarity classes of these numerical quantities which are conjugacy invariants. So, the real parts and the moduli of the quantities are the conjugacy invariants associated to the $\Sp(n,1)$ conjugation orbit of $(A, B)$. However, these numerical invariants do not classify the pair $(A, B)$ completely. Rather, there is a whole fiber of points that corresponds to a fixed tuple of numerical invariants. These fibered elements correspond to the product of copies of $\C \P^1$ that we  call as \emph{projective points}  of $(A, B)$. Each of these $\C \P^1$ represents an eigenspace of $A$ or $B$, and a point on the given $\C \P^1$  corresponds to an `eigenset'. We note here that corresponding to a regular loxodromic, there are $n$ projective points, one each for the $n-1$ space-like eigenvectors, and one for the null eigenvectors.   With these terminologies, we have the following theorem where we refer to \secref{ci} for the precise list of the numerical invariants mentioned here.  

\begin{theorem} \label{thm2} Let $\rho: {\mathrm F}_2 \to \Sp(n,1)$ be a representation such that $(\rho(x), \rho(y))$ is weakly non-singular. Then $\rho$ is determined uniquely in the character variety by the $\rtr(\rho(x))$, $\rtr(\rho(y))$, the angular invariant $\A(a_{\rho(x)}, r_{\rho(x)}, a_{\rho(y)})$, the projective points and the $\Sp(1)$-conjugation orbit of the (unordered) tuple consisting of the usual cross ratios, the generalized cross ratios and the Goldman's eta invariants.  
\end{theorem}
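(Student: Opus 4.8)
The plan is to prove the equivalent statement that two weakly non-singular representations carrying the same data are $\Sp(n,1)$-conjugate, by reconstructing the pair $(A,B):=(\rho(x),\rho(y))$ from its combined eigenvector system. Write the eigenbases $\mathcal{B}_A=(\aa,\ra,{\bf x}_{1,A},\dots,{\bf x}_{n-1,A})$ and $\mathcal{B}_B=(\ab,\rb,{\bf x}_{1,B},\dots,{\bf x}_{n-1,B})$. By regularity each is a basis of $\F^{n+1}$, and since distinct eigenvalue classes force the eigenvectors of a single map to be pairwise $\langle\cdot,\cdot\rangle$-orthogonal apart from the null pair, a standard normalization (null pairing fixed, space-like eigenvectors orthonormal) makes the Gram matrix $\Gamma_A$ of $\mathcal{B}_A$ a fixed standard matrix, while $\rtr(A)$ fixes the eigenvalue classes of $A$. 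After conjugating $A$ into this standard form, the task becomes to pin down the cross-block of quaternionic inner products $\langle e,f\rangle$, $e\in\mathcal{B}_A$, $f\in\mathcal{B}_B$: since $\Gamma_A$ is non-degenerate, these recover the coordinates of each $B$-eigenvector $f$ in $\mathcal{B}_A$, and $B$ is then rebuilt from the eigenvalue classes of $\rtr(B)$ through the quaternionic rule $B\bigl(\sum_j f_j c_j\bigr)=\sum_j f_j\mu_j c_j$.

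First I would fix the four boundary points. The usual cross ratios $\X_1,\X_2,\X_3$ record the similarity classes of the null-null inner products; combined with the angular invariant $\A(a_A,r_A,a_B)$ and the projective point attached to the null pair, they determine the ordered quadruple $(a_A,r_A,a_B,r_B)$ up to $\Sp(n,1)$-congruence, in the spirit of the quaternionic quadruple classification of Cao \cite{cao}. Thus $\aa,\ra,\ab,\rb$ are fixed as honest vectors, up to the global conjugation that merely reflects a common unit-quaternion rescaling of all lifts and so does not alter the pair.

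Next I would fix the space-like eigenvectors in this boundary frame. The generalized cross ratios encode the similarity classes of the inner products pairing the ${\bf x}_{j,A}$ and ${\bf x}_{j,B}$ against the now-fixed null vectors and against each other, while the Goldman eta invariants encode the same data phrased through the polar hyperplanes $W_{j,A}=\x_{j,A}^{\perp}$. As stressed in the introduction, these numerical quantities do not classify the pair: fixing them leaves a fibre isomorphic to a product of copies of $\C\P^1$, one factor per eigenspace. The projective points are precisely a choice of point in this fibre, i.e. the imaginary direction of each inner product that the real-part-and-modulus data cannot detect; feeding them in upgrades every entry of the cross-block to a genuine quaternion. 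Since everything has been done in the fixed boundary frame, the only surviving freedoms are the harmless lift rescalings and conjugation by the centralizer of $A$, both of which preserve the conjugacy class of $(A,B)$.

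Assembling the pieces, the cross-block $\{\langle e,f\rangle\}$ is determined up to $\Sp(n,1)$-equivalence, hence so are the coordinates of $\mathcal{B}_B$ in $\mathcal{B}_A$ and, with the $B$-eigenvalue classes, the element $B$ itself; a second representation $\rho'$ with identical data yields, after the same normalization, the same $A$ and a $B'$ differing from $B$ only by conjugation in the centralizer of $A$, so $\rho$ and $\rho'$ are conjugate, giving the asserted uniqueness in the character variety. Weak non-singularity is what legitimizes the construction: regularity makes each eigenset a basis, and the no-common-fixed-point and generic-flag conditions keep the inner products entering the cross ratios and eta invariants nonzero, so every ratio is defined and every coordinate recovery is non-degenerate. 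The main obstacle is the fibre analysis of the previous paragraph: one must prove that the $\C\P^1$-valued projective points biject with the reconstruction ambiguity of the space-like eigenvectors for each $n$, and that these directional choices are mutually compatible across the overlapping triple products that enter different generalized cross ratios. This is exactly where quaternionic non-commutativity bites, since conjugating one inner product simultaneously twists all the others, and it is the step that genuinely lifts the $\Sp(2,1)$ computation of \cite{gk1} to arbitrary dimension.
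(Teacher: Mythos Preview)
Your plan has the right architecture---match the projective fixed points, then upgrade to an equality of elements---but two ingredients are mis-identified, and this creates a genuine gap.

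First, you treat the cross-ratio and eta data as a list of \emph{similarity classes} (real parts and moduli), and then call on the projective points to supply the missing ``imaginary direction of each inner product''. That is not what the hypothesis gives and not what projective points are. The theorem hands you the simultaneous $\Sp(1)$-conjugation orbit of the entire tuple of quaternionic invariants; this is strictly more than the individual similarity classes and, together with the single angular invariant, already determines the normalized Gram matrix of the $2n$-tuple $(a_A,r_A,a_B,r_B,x_{1,A},\dots,x_{n-2,B})$ up to a global $\Sp(1)$-twist (this is the content of \lemref{leg2} in the paper). Your appeal to Cao's quadruple theorem is symptomatic of the misreading: Cao needs three angular invariants, whereas only $\A(a_A,r_A,a_B)$ is available---precisely because the extra information sits in the joint $\Sp(1)$-orbit, not in separate similarity classes.

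Second, projective points play no role in fixing the tuple of projective fixed points. In the paper's argument the Gram-matrix orbit alone, via \lemref{leg}, produces a $C\in\Sp(n,1)$ with $C(p_i)=p_i'$; only \emph{after} $CAC^{-1}$ and $A'$ share all projective fixed points and the same real trace does one invoke the projective points, through \lemref{lox}, to conclude $CAC^{-1}=A'$ (and likewise for $B$). The projective points parametrize which element of a fixed conjugacy class sits over a fixed system of eigenspaces---they are an invariant of the loxodromic element, not a device for resolving inner-product phases. Consequently the ``fibre analysis'' you flag as the main obstacle, matching $\C\P^1$ factors to inner-product ambiguities across overlapping triple products, is not needed and would not work as stated: the paper sidesteps it entirely by working with the normalized Gram matrix.
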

The following theorem follows by restricting the proof of the above theorem over complex numbers.  
\begin{theorem} \label{cor3} Let $\rho: {\mathrm F}_2 \to \SU(n,1)$ be a representation such that $(\rho(x), \rho(y))$ is weakly non-singular. Then $\rho$ is determined uniquely in the character variety by  $\tr(\rho(x)^j)$, $\tr(\rho(y)^j)$, $1 \leq j \leq [\frac{n+1}{2}]$,  the angular invariant $\A(a_{\rho(x)}, r_{\rho(x)}, a_{\rho(y)})$,  the usual cross ratios,  the generalized cross ratios and the Goldman's eta invariants. 
\end{theorem}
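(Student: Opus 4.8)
The plan is to derive Theorem~\ref{cor3} as the complex specialization of Theorem~\ref{thm2}, tracking how each ingredient of the quaternionic reconstruction simplifies when $\F=\C$. The guiding principle is that over $\C$ the three sources of ambiguity that forced the quaternionic statement to use real traces, similarity classes, and projective points all collapse: eigenvalues become genuine scalars rather than similarity classes, so $\rtr$ is replaced by the usual $\tr(\rho(x)^j)$; the cross ratios are honest conjugacy invariants rather than quantities recorded only through their moduli and real parts; and each positive-definite eigenspace is a complex line, so the $\C \P^1$-fibers producing the projective points of $(A,B)$ degenerate to single points carrying no information. Thus one expects a clean uniqueness statement rather than uniqueness up to a fiber.

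First I would normalize $\rho(x)=A$. Since $A$ is regular loxodromic in $\SU(n,1)$, its eigenvalues are distinct and arrange in the inverse–conjugate pattern $re^{i\theta},\, r^{-1}e^{i\theta},\, e^{i\phi_1}, \ldots, e^{i\phi_{n-1}}$; because $A$ preserves the form and has determinant $1$, its characteristic polynomial is self-inversive, and its coefficients are recovered from $\tr(A^j)$ for $1 \leq j \leq [\frac{n+1}{2}]$ via Newton's identities. Conjugating by a suitable element of $G$, I fix a standard Hermitian form and a standard eigenbasis $a_A, r_A, x_{1,A}, \ldots, x_{n-1,A}$, so that $A$ is diagonal and determined. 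The residual freedom is the centralizer of $A$, which for a regular element is the diagonal torus acting by unit-modulus rescalings of the eigenvectors; this torus action is exactly what the listed invariants are designed to quotient out.

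Next I reconstruct $\rho(y)=B$ from its eigendata. Its eigenvalues are fixed by $\tr(B^j)$ as above, so it remains to pin down the null eigenvectors $a_B, r_B$ and the positive eigenvectors $x_{j,B}$ inside the normalized picture. I would assemble the full Gram matrix of the $2n$ vectors $\{a_A, r_A, x_{j,A}\} \cup \{a_B, r_B, x_{j,B}\}$: the two diagonal blocks are standardized, and the cross block records the Hermitian products between $A$- and $B$-eigenvectors. Over $\C$ each usual cross ratio, generalized cross ratio, and Goldman eta invariant is a ratio of such products and is invariant precisely under the residual torus; the content of the reconstruction is that the listed family determines every cross-block entry up to the torus phases, while the single angular invariant $\A(a_A, r_A, a_B)$ supplies the one genuine $\SU(n,1)$-congruence datum (the Cartan angle of the boundary quadruple) that cross ratios cannot detect. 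Weak non-singularity guarantees that the relevant denominators are nonzero and the eigenvectors are in general position, so the cross block is well-defined and the resulting Gram matrix has signature $(n,1)$; a Hermitian matrix of this signature determines its vector configuration up to $\SU(n,1)$, so $B$ is fixed up to conjugacy, and combining with the normalization of $A$ recovers $\rho$ uniquely in the character variety.

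The main obstacle I expect is the bookkeeping of the reconstruction step: verifying that this particular list of usual cross ratios, generalized cross ratios, and eta invariants is exactly complete enough to fix all cross-block entries modulo the torus, and that the lone angular invariant then closes the remaining phase without redundancy. This is precisely where the complex case piggybacks on Theorem~\ref{thm2} — the combinatorics of which products are pinned down and which residual phases survive is identical to the quaternionic argument, except that over $\C$ one need not separately carry moduli and real parts, so no fiber of projective points remains and the conclusion upgrades to genuine uniqueness.
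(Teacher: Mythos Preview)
Your proposal is correct and takes essentially the same approach as the paper: the paper's proof of Theorem~\ref{cor3} is literally the one-line remark that it follows from Theorem~\ref{thm2} by restricting everything over $\C$, and your write-up is an accurate expansion of why that restriction works (the Gram-matrix reconstruction via Lemma~\ref{leg2} and Lemma~\ref{leg} goes through verbatim, while the $\Sp(1)$-orbit ambiguity and the projective-point fibers collapse over $\C$). Your identification of the ``main obstacle'' as the bookkeeping that the invariant list exactly determines the cross-block Gram entries is precisely the content of Lemma~\ref{leg2}, which the paper proves once in the quaternionic setting and then inherits here.
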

\thmref{cor3} is implicit in the work \cite{gp3} and the above statement was noted in an older version: arXiv 1705.10469v2.

\medskip However, the degrees of freedom of the parameters in the above classification do not add up to the dimension of the group even in the lower dimensions. We would like to further obtain a smaller subfamily of invariants that might be sufficient for the classification. First, we shall consider the group $\SU(n,1)$.  In the following, we have used a method that is similar to the one used in \cite{gp2}. We would only need the following generalized cross ratios to classify a non-singular pair. For $1 \leq k \leq n-2$, let 
 $$\alpha_k(A, B)=\X(a_A, r_A, a_B, x_{k, B}),  ~\beta_k (A, B)=\X(a_B, r_B, a_A, x_{k, A}).$$
By the definition of non-singularity, the above quantities are non-zero and well-defined. 
In the case of $\SU(n,1)$, Cunha and Gusevskii proved in \cite{cugu2} that the moduli of ordered quadruple of points $(p_1, p_2, p_3, p_4)$ on $\partial \ch^n$ is determined by a point on a five dimensional subspace of $\R^5$  that consists of the points $(\A(p_1, p_2, p_3), \X_1(p_1, p_2, p_3, p_4), \X(p_1, p_4, p_3, p_2))$ satisfying some semi-algebraic equation. We shall use a point on this `Cunha-Gusevskii variety'. We have the following result in this set up that generalizes \cite[Theorem 1.1]{gp2}. 
\begin{theorem} \label{cor2}
Let $(A, B)$ be a non-singular pair in $\SU(n,1)$.  Then the $\SU(n, 1)$ conjugation orbit of $(A, B)$ is 
uniquely determined by the following parameters:
\medskip 
\begin{itemize}
\item [$\bullet$] $\tr(A^j),~\tr(B^j)$,  $1 \leq j \leq [\frac{n+1}{2}]$,
\item[$\bullet$]the cross ratios $\X_k(A, B)$, $k=1,2$,
\item[$\bullet$] the angular invariant $\A(a_A, r_A, a_B)$,
\item[$\bullet$]the $\alpha$-invariants $\alpha_k(A, B)$ and the $\beta$-invariants $\beta_k(A,B)$, $1 \leq k \leq n-2$. 
\end{itemize}
\end{theorem}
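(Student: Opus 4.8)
The plan is to prove the statement by \emph{reconstruction}: I show that the listed invariants determine, up to simultaneous $\SU(n,1)$-conjugacy, a canonical representative of $(A,B)$, equivalently that two non-singular pairs sharing all the data are conjugate. The guiding principle is that a regular loxodromic $A$ is completely encoded by its eigenvalues together with its eigenframe --- the ordered null fixed points $a_A,r_A$ and the $n-1$ positive eigendirections $x_{1,A},\dots,x_{n-1,A}$ --- and that, in the complex case, a configuration of projective points of $\C\P^n$ is determined up to $\U(n,1)$ (hence, after normalising the determinant, up to $\SU(n,1)$) by the Hermitian Gram matrix of any chosen lifts. Since the cross ratio \eqref{ecr} and the angular invariant \eqref{ainv} are here genuine lift-independent conjugacy invariants, it suffices to recover the Gram matrix of the $2(n+1)$ eigenvectors of $A$ and $B$ up to the rescaling of lifts and the residual action of the centraliser of the normalised copy of $A$.

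First I recover the two spectra. As recalled above, the eigenvalues of an element of $\SU(n,1)$ occur in reciprocal-conjugate pairs $\lambda,\bar\lambda^{-1}$, so its characteristic polynomial is fixed by its first $[\frac{n+1}{2}]$ power sums; hence $\tr(A^j)$ and $\tr(B^j)$, $1\le j\le[\frac{n+1}{2}]$, determine all eigenvalues of $A$ and $B$. I then conjugate $A$ into the standard diagonal form on $\C^{n,1}$ in which $(a_A,r_A)$ is the standard isotropic pair, $\langle\aa,\ra\rangle=1$, and $x_{1,A},\dots,x_{n-1,A}$ are standard orthonormal positive vectors; this pins $A$ down exactly, leaving only its centraliser torus $Z(A)$ and the lift ambiguities as residual freedom. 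Next I place the axis of $B$: the four null points $a_A,r_A,a_B,r_B$ form an ordered quadruple of $\partial\ch^n$ whose congruence class, by the Cunha--Gusevskii theorem \cite{cugu2} quoted above, is the point of a five-dimensional variety recorded by $\A(a_A,r_A,a_B)$ and the cross ratios $\X_1(A,B),\X_2(A,B)$; as $a_A,r_A$ are already fixed, this positions $a_B,r_B$ up to the residual symmetry.

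The $\beta$-invariants then fix the position of $a_B,r_B$ relative to the positive eigenframe of $A$, which the four-point configuration alone cannot see: expanding $\beta_k(A,B)=\X(a_B,r_B,a_A,x_{k,A})$ via \eqref{ecr} exhibits it as the ratio $\langle\xa,\rb\rangle/\langle\xa,\ab\rangle$ up to already-known factors, and since the $x_{k,A}$ are fixed, letting $k=1,\dots,n-2$ determines $a_B,r_B$ modulo $Z(A)$. With $A$ and the axis of $B$ fixed, the only remaining unknowns are the positive eigendirections of $B$, an orthonormal frame of the positive-definite complement $V_B$ of the $B$-axis. Here the $\alpha$-invariants enter: $\alpha_k(A,B)=\X(a_A,r_A,a_B,x_{k,B})$ is, again by \eqref{ecr}, the ratio $\langle\xb,\ra\rangle/\langle\xb,\aa\rangle$ of the pairings of $x_{k,B}$ with the fixed isotropic pair of $A$, and for $k=1,\dots,n-2$ these locate $x_{1,B},\dots,x_{n-2,B}$, the last direction $x_{n-1,B}$ being forced as the orthogonal complement in $V_B$ of the span of the others. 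Restoring the eigenvalues of $B$ reconstitutes $B$, so the Gram matrix of the whole eigenframe of $(A,B)$ is now determined up to the lift rescalings and $Z(A)$ --- exactly the freedoms preserving the $\SU(n,1)$-orbit --- whence two non-singular pairs with equal invariants are conjugate. Throughout, the hypotheses of Definitions \ref{nsl}--\ref{nsl1} are precisely what keep the four null points and the relevant flags in general position, so that every cross ratio is defined with nonzero denominator and the orthogonal-complement step is unambiguous.

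The step I expect to be the crux is this final matching: verifying that the partial data --- only $n-2$ members of each family of generalised cross ratios, supplemented by the orthogonality relation recovering the missing eigendirection --- really separate the $Z(A)$-orbits of the eigenframe of $B$ with no leftover slack, i.e. that the assignment of $(\alpha_k,\beta_k)$ to the frame of $B$ is injective on the quotient by the residual centraliser. For $n=3$ this is transparent: one inverts a single cross ratio to place one eigendirection in the two-dimensional positive complement $V_B$ (a point of $\C\P^1$) and reads off the second by orthogonality, reproducing \cite[Theorem 1.1]{gp2}. In higher dimensions the argument must lean on the rigidity forced by non-singularity to close the gap between the positive-eigenframe data and the residual symmetry, and it is there that the bulk of the technical work lies.
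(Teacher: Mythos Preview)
Your outline has the right architecture --- recover the spectra from the power traces, position the four null fixed points via Cunha--Gusevskii, then use the $\alpha$- and $\beta$-invariants to pin down the positive eigendirections --- and you correctly flag as the crux the question of why a \emph{single} ratio $\alpha_k$ suffices to locate $x_{k,B}$ inside the $(n-1)$-dimensional positive complement $V_B$. But you stop precisely where the content is: that step is not residual technical work, it \emph{is} the proof, and the paper dispatches it in a few lines by a route that sidesteps your normalisation-and-residual-torus bookkeeping entirely.

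Rather than fix $A$ in standard form and chase $Z(A)$-orbits, the paper (\lemref{csprp1}) takes two pairs $(A,B)$, $(A',B')$ with equal invariants and compares them directly. Cunha--Gusevskii furnishes $C\in\SU(n,1)$ carrying $(a_A,r_A,a_B,r_B)$ to $(a_{A'},r_{A'},a_{B'},r_{B'})$; one must show this same $C$ sends $x_{k,B}$ to $x_{k,B'}$. Equality of $\alpha_k$ yields a scalar $\lambda$ for which $\v=\xb-C^{-1}(\yxb)\lambda$ is orthogonal to $\aa$ and $\ra$; and since $\xb\perp\{\ab,\rb\}$ and $\yxb\perp\{\yab,\yrb\}$, the vector $\v$ is also orthogonal to $\ab$ and $\rb$. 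If $\v\neq 0$ it is then a positive vector polar to a hyperplane whose boundary contains all of $a_A,r_A,a_B,r_B$ --- exactly what non-singularity forbids. Hence $\v=0$, so $C(x_{k,B})=x_{k,B'}$; the $\beta$-invariants handle the $x_{k,A}$ symmetrically (so your dual reading of $\beta_k$ as constraining $a_B,r_B$ relative to the $A$-frame is not needed), and \lemref{lox1} finishes. This orthogonality-to-both-axes argument is the ``rigidity forced by non-singularity'' you gesture at; once written down it is uniform in $n$ with no separate low-dimensional case, and without it your proposal remains an outline rather than a proof.
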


Restating the above theorem in terms of representations, we have the following. 
\begin{theorem} \label{cns}
Let $\rho: {\rm F}_2 \to {\rm SU}(n,1)$ be a representation such that $(\rho(x), \rho(y))$ is non-singular. Then the point $\rho$ in $\fX({\mathrm F}_2, \SU(n,1))$ is uniquely determined by the following parameters:
\medskip 
\begin{itemize}
	\item [$\bullet$]$\tr(\rho(x)^j),~\tr(\rho(y)^j)$,  $1 \leq j \leq [\frac{n+1}{2}]$,
	\item [$\bullet$]the cross ratios $\X_k(\rho(x), \rho(y))$, $k=1,2$, 
	\item [$\bullet$] the \hbox{angular} invariant $\A(a_{\rho(x)}, r_{\rho(x)}, a_{\rho(y)})$,
	\item [$\bullet$]the $\alpha$-invariants $\alpha_k(\rho(x), \rho(y))$ and  the $\beta$-invariants $\beta_k(\rho(x), \rho(y))$, $1 \leq k \leq n-2$.
\end{itemize}	   
\end{theorem}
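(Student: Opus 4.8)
The plan is to obtain \thmref{cns} as a direct translation of \thmref{cor2} into the language of the character variety, so that all of the substantive content is carried by the already-established \thmref{cor2}. The first step is to make the dictionary between representations and pairs precise. Since ${\mathrm F}_2=\langle x, y\rangle$ is free on the two generators $x$ and $y$, a homomorphism $\rho:{\mathrm F}_2\to\SU(n,1)$ is exactly the datum of the ordered pair $(\rho(x),\rho(y))\in\SU(n,1)\times\SU(n,1)$, and every such pair arises from a unique $\rho$. Thus $\hm({\mathrm F}_2,\SU(n,1))$ is identified with $\SU(n,1)\times\SU(n,1)$.

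The second step records how the group action matches up under this identification. The action of $g\in\SU(n,1)$ on $\rho$ by inner automorphisms sends $w\mapsto g\,\rho(w)\,g^{-1}$; on the level of generators this is $(\rho(x),\rho(y))\mapsto(g\,\rho(x)\,g^{-1},\,g\,\rho(y)\,g^{-1})$, which is precisely the diagonal $\SU(n,1)$-conjugation of the pair. Hence, passing to quotients, a point $\rho$ of $\fX({\mathrm F}_2,\SU(n,1))=\hm({\mathrm F}_2,\SU(n,1))/\SU(n,1)$ is exactly the $\SU(n,1)$-conjugation orbit of the pair $(\rho(x),\rho(y))$.

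Finally I would set $A=\rho(x)$ and $B=\rho(y)$. The hypothesis that $(\rho(x),\rho(y))$ is non-singular is verbatim the hypothesis of \thmref{cor2} for $(A,B)$, and each invariant listed in \thmref{cns} --- the traces $\tr(\rho(x)^j),\tr(\rho(y)^j)$ for $1\leq j\leq[\frac{n+1}{2}]$, the cross ratios $\X_k(\rho(x),\rho(y))$ for $k=1,2$, the angular invariant $\A(a_{\rho(x)},r_{\rho(x)},a_{\rho(y)})$, and the invariants $\alpha_k(\rho(x),\rho(y))$, $\beta_k(\rho(x),\rho(y))$ for $1\leq k\leq n-2$ --- is literally the corresponding invariant of $(A,B)$ in \thmref{cor2} after the substitution $A=\rho(x)$, $B=\rho(y)$. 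Applying \thmref{cor2}, these invariants uniquely determine the $\SU(n,1)$-conjugation orbit of $(A,B)$, which by the previous step is the point $\rho$ in $\fX({\mathrm F}_2,\SU(n,1))$.

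There is no genuine obstacle here, since the statement is a reformulation rather than a new assertion; the only points requiring (minimal) care are verifying that conjugation of representations corresponds to diagonal conjugation of pairs and that the listed invariants coincide termwise with those of \thmref{cor2}. Accordingly, I would keep the write-up to a few lines, emphasising the identification of $[\rho]$ with the conjugation orbit and then citing \thmref{cor2}.
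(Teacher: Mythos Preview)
Your proposal is correct and matches the paper's approach exactly: the paper introduces \thmref{cns} with the sentence ``Restating the above theorem in terms of representations, we have the following,'' and gives no separate proof, so the intended argument is precisely the identification of $[\rho]$ with the conjugation orbit of $(\rho(x),\rho(y))$ followed by an appeal to \thmref{cor2}.
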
 

Thus, the local dimension of the coordinates adds up to at most $6n-1$: for the traces at most $n+1$ contributing at most $2n+2$; for the point on the cross ratio variety $5$; for the ~$\alpha$ and $\beta$-invariants $4(n-2)= 2 \times (n-2)$-$\alpha$ invariants + $2 \times (n-2)$-$\beta$-invariants;  the total adds up to $2n+2+5+4(n-2)=6n-1$. 

 A particularly interesting case appears when $n=3$.  In this case, the traces of loxodromics form a real three dimensional family, and the above parameters add up to $15$, the dimension of $\SU(3,1)$. 
\begin{corollary} \cite[Theorem 1.1]{gp2} 
Let $\rho: {\rm F}_2 \to {\rm SU}(3,1)$ be a representation such that $(\rho(x), \rho(y))$ is non-singular.  Then the point $\rho$ in $\fX({\mathrm F}_2, \SU(n,1))$ is uniquely determined by the following $15$ dimension  parameter system.

\begin{itemize}
	\item [$\bullet$] $\tr(\rho(x)),~\tr(\rho(y))$,
	\item[$\bullet$] $\sigma(\rho(x))$, $\sigma(\rho(y))$,
	\item[$\bullet$] $\X_k(\rho(x), \rho(y))$, $k=1,2,3$,
	\item[$\bullet$] $\alpha_1(\rho(x),\rho(y))$, $\beta_1(\rho(x), \rho(y))$,
\end{itemize}
where for an element $g\in \SU(3,1)$,  $\sigma(g)=(\tr^2(g)-\tr(g^2))/2$.
\end{corollary}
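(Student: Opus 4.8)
The plan is to obtain this corollary as the $n=3$ specialization of \thmref{cns} (equivalently \thmref{cor2}), so that the work consists of a translation of parameters rather than a new classification. With $n=3$ we have $[\frac{n+1}{2}]=2$ and $n-2=1$, so \thmref{cns} already supplies the trace data $\tr(\rho(x)^j),\tr(\rho(y)^j)$ for $j=1,2$, the single pair $\alpha_1(\rho(x),\rho(y)),\beta_1(\rho(x),\rho(y))$, the two cross ratios $\X_1,\X_2$, and the angular invariant $\A(a_{\rho(x)},r_{\rho(x)},a_{\rho(y)})$. The $\alpha_1,\beta_1$ data occurs verbatim in the corollary, so only the trace package and the cross-ratio/angular package must be re-expressed.

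First I would translate the traces. For $g\in\SU(3,1)$ the Hermitian symmetry of signature $(3,1)$ sends each eigenvalue $\lambda$ to $\overline{\lambda}^{-1}$, which forces the characteristic polynomial into the reciprocal-conjugate form $\lambda^4-\tr(g)\lambda^3+\sigma(g)\lambda^2-\overline{\tr(g)}\,\lambda+1$, with $\sigma(g)=(\tr^2(g)-\tr(g^2))/2$ necessarily real. Hence the conjugacy class of a loxodromic element is a genuine three-real-dimensional family parametrized by $(\tr(g),\sigma(g))\in\C\times\R$, and Newton's identity $\tr(g^2)=\tr^2(g)-2\sigma(g)$ shows that $\{\tr(g),\sigma(g)\}$ and $\{\tr(g),\tr(g^2)\}$ carry exactly the same information. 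This matches the first two bullet points of the corollary with the trace data of \thmref{cns}.

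The remaining step is to replace the package $(\X_1,\X_2,\A(a_{\rho(x)},r_{\rho(x)},a_{\rho(y)}))$ by the three cross ratios $\X_1,\X_2,\X_3$. Because $(A,B)$ is non-singular the four boundary fixed points $a_A,r_A,a_B,r_B$ are distinct and in general position, so all the cross ratios are well-defined and non-degenerate. By the Cunha--Gusevskii classification quoted above, the $\SU(3,1)$-congruence class of the quadruple $(a_A,r_A,a_B,r_B)$ is encoded by a single point of the five-dimensional Cunha--Gusevskii variety, and that point is precisely the combination of the two cross ratios $\X_1,\X_2$ together with the angular invariant $\A(a_A,r_A,a_B)$ used in \thmref{cns}. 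On the other hand, the three cross ratios $\X_1,\X_2,\X_3$ determine all $24$ permuted cross ratios of the quadruple through the symmetric-group relations, and, subject to the single real equation cutting out the variety, form an alternative coordinate system for the same five-dimensional moduli. I would make the interchange explicit by writing the triple product $\langle\mathbf a_A,\mathbf r_A,\mathbf a_B\rangle$ governing $\A$ in terms of the $\X_k$ via \eqnref{ecr} and a Gram-matrix normalization of the four null lifts, and conversely recovering $\X_3$ from $\X_1,\X_2$ and $\A$.

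I expect this interchange to be the main obstacle: one must verify that on the non-singular locus the assignment $(\X_1,\X_2,\X_3)\mapsto$ Cunha--Gusevskii point is an honest bijection, not merely a finite-to-one correspondence, so that no information is lost in passing between the two parameter sets. Granting this, the parameters of the corollary and those of \thmref{cns} determine one another, and the uniqueness assertion follows. It remains to check the dimension bookkeeping: the traces contribute $6$ real dimensions ($\tr\in\C$ and $\sigma\in\R$ for each generator), the boundary quadruple contributes the $5$ dimensions of the Cunha--Gusevskii variety, and $\alpha_1,\beta_1\in\C$ contribute $4$, for a total of $15=\dim_{\R}\SU(3,1)$.
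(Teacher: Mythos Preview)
Your approach is exactly the paper's: the corollary is stated without proof as the $n=3$ instance of \thmref{cns}, with the details attributed to \cite{gp2}. Your translation of $(\tr(g),\tr(g^2))$ into $(\tr(g),\sigma(g))$ via Newton's identity and the reciprocal-conjugate form of the characteristic polynomial is correct, as is your dimension count.

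The point you flag as ``the main obstacle'' is indeed the only nontrivial step, and you are right not to take it for granted. The paper sidesteps it entirely by citing \cite{gp2}, but if you want the argument to stand on its own you should close this gap rather than leave it conditional. Concretely: with the normalization $\langle \aa,\ra\rangle=\langle \aa,\ab\rangle=\langle \aa,\rb\rangle=1$ and $|\langle \ra,\ab\rangle|=1$ used in \secref{nor}, one has $g_{23}=-e^{i\A}$, $\X_1=\overline{g_{23}}^{-1}\overline{g_{24}}$, $\X_2=g_{23}^{-1}\overline{g_{34}}$, while $\X_3=\X(r_A,r_B,a_B,a_A)$ computes to $\overline{g_{24}}\,\overline{g_{34}}^{-1}\,g_{23}$. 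From these one checks directly that $\X_1\overline{\X_2}\,\X_3^{-1}=\overline{g_{23}}^{\,-2}=e^{2i\A}$, so on the non-singular locus the triple $(\X_1,\X_2,\X_3)$ recovers $\A$ up to the sign ambiguity $\A\leftrightarrow -\A$. That ambiguity is exactly complex conjugation on the Gram matrix and is resolved by the imaginary part of any one of the $\X_k$ (or, in \cite{gp2}, by an orientation convention). Once you record this, the equivalence $(\X_1,\X_2,\A)\leftrightarrow(\X_1,\X_2,\X_3)$ is an honest bijection on the non-singular locus and your argument is complete.
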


However, for $n \geq 4$, the local dimensions of the above parameter system is lesser than the dimension of the underlying group. With larger $n$, the upper bound $6n-1$ of the dimension of the parameter system becomes smaller in comparison to the dimension of $\SU(n,1)$ which is $n^2+2n$. 

\medskip Now we shall consider the quaternionic case. 
 An advantage of \thmref{thm2} is that the numerical invariants used there do not depend on the choices of the lifts of points of $\H \P^n$ to $\H^{n,1}$, and they serve as well-defined conjugacy invariants. But the similarity classes of $\alpha_k(A, B)$ and $\beta_k(A, B)$ do not determine the Gram matrix of $(A, B)$ uniquely.  This calls for some adjustment in the choices of the invariants. One way to avoid this difficulty is to adopt the convention of fixing a frame of reference. We adopt the convention of fixing the lift of the attracting fixed points. We shall take the standard lift,   see \secref{prel}, of the attracting fixed point of $A$ in the pair $(A, B)$. After this restriction, the numerical quantities $\alpha_k(A, B)$ and $\beta_k(A, B)$ will be well-defined invariants, as well as the usual cross ratios will be uniquely assigned to $(A, B)$. Comparable convention of fixing a frame of reference was used by Jiang and Gou in \cite{jg} in their understanding of the moduli space of ordered quadruples on $\partial \h^n$. In view of the chosen frame of reference, we have the following.
\begin{theorem} \label{qc2}
Let $\rho: {\rm F}_2 \to {\rm Sp}(n,1)$ be a representation such that $(\rho(x), \rho(y))$ is non-singular. We adopt the convention of taking the standard lift of the fixed point $a_{\rho(x)}$.  Then the point $\rho$ in $\fX({\mathrm F}_2, \Sp(n,1))$ is  determined by the following parameters: 

\begin{itemize}
	\item [$\bullet$]  $\rtr(\rho(x)),~\rtr(\rho(y))$,
	\item[$\bullet$] the angular invariants $\A_k(\rho(x), \rho(y))$,
	\item[$\bullet$] the usual cross {ratios} $\X_k(\rho(x), \rho(y))$, k=1,~2,~3,
	\item[$\bullet$] the $\alpha$-invariants $\alpha_k(\rho(x), \rho(y))$,  the $\beta$-invariants $\beta_k(\rho(x), \rho(y))$, {$1 \leq k \leq n-2$},
	\item[$\bullet$] the projective points 
	$(p_1(\rho(x)), \ldots, p_n(\rho(x))),~ (p_1(\rho(y)),\ldots, p_n(\rho(y)))$. 
\end{itemize}
\end{theorem}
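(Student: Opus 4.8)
The plan is to reconstruct, from the listed invariants, the full Gram matrix of the combined eigenframe of the pair $(A,B):=(\rho(x),\rho(y))$, and then to invoke the standard fact that a pair of loxodromic elements is determined up to $\Sp(n,1)$-conjugacy by the $\Sp(n,1)$-conjugacy classes of $A$ and $B$ together with the relative position of their eigenframes. Write $\mathcal{E}_A=\{\mathbf{a}_A,\mathbf{r}_A,\mathbf{x}_{1,A},\dots,\mathbf{x}_{n-1,A}\}$ and $\mathcal{E}_B$ analogously; regularity of $A$ makes $\mathcal{E}_A$ a basis of $\H^{n,1}$. Since $\rtr(A)$ together with the projective points $(p_1(A),\dots,p_n(A))$ fixes the conjugacy class of the single element $A$ (and likewise for $B$), the self-Gram blocks of $\mathcal{E}_A$ and of $\mathcal{E}_B$ are determined and take an explicit standard shape; the only remaining unknown is the mixed block $M=\bigl(\langle\mathbf{e},\mathbf{f}\rangle\bigr)_{\mathbf{e}\in\mathcal{E}_A,\,\mathbf{f}\in\mathcal{E}_B}$. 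Thus everything reduces to showing that $M$ is recovered from the invariants of \thmref{qc2}; this is the quaternionic and reduced-invariant refinement of the weakly non-singular classification of \thmref{thm2}, carried out in the spirit of the complex argument of \cite{gp2}.

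First I would normalise the lifts. Fixing the standard lift (\secref{prel}) of $\mathbf{a}_A$ kills the global left-scaling gauge on the frame; the spectral normalisation of the regular loxodromic $A$ then determines $\mathbf{r}_A$ and, up to the $\Sp(1)$-phases recorded by the projective points, the positive eigenvectors $\mathbf{x}_{k,A}$. With these lifts in place the cross ratios $\X_k$ and the angular invariants $\A_k$ are no longer mere similarity classes but carry genuine quaternionic values, and this is precisely what the convention of \thmref{qc2} buys over the $\Sp(1)$-orbit data of \thmref{thm2}. Applying Cao's congruence theorem (\cite{cao}) to the boundary quadruple $(a_A,r_A,a_B,r_B)$, whose class is pinned by $\X_1,\X_2,\X_3$ and $\A_1,\A_2,\A_3$, then determines the $2\times2$ boundary sub-block $\langle\mathbf{a}_A,\mathbf{a}_B\rangle,\langle\mathbf{a}_A,\mathbf{r}_B\rangle,\langle\mathbf{r}_A,\mathbf{a}_B\rangle,\langle\mathbf{r}_A,\mathbf{r}_B\rangle$ of $M$, once the lifts of $\mathbf{a}_B,\mathbf{r}_B$ are chosen compatibly with the spectral normalisation of $B$.

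Next I would extract the mixed boundary-versus-positive entries. Expanding $\alpha_k=\X(a_A,r_A,a_B,x_{k,B})$ and $\beta_k=\X(a_B,r_B,a_A,x_{k,A})$ through \eqnref{ecr} writes them in terms of the quotients $\langle\mathbf{x}_{k,B},\mathbf{r}_A\rangle\langle\mathbf{x}_{k,B},\mathbf{a}_A\rangle^{-1}$ and $\langle\mathbf{x}_{k,A},\mathbf{r}_B\rangle\langle\mathbf{x}_{k,A},\mathbf{a}_B\rangle^{-1}$ together with the already-known boundary entries. Since non-singularity (\defref{nsl1}) forces every denominator to be non-zero and the eigenvector scalings are fixed by the standard lift and the projective points, this recovers $\langle\mathbf{x}_{k,B},\mathbf{a}_A\rangle,\langle\mathbf{x}_{k,B},\mathbf{r}_A\rangle$ and $\langle\mathbf{x}_{k,A},\mathbf{a}_B\rangle,\langle\mathbf{x}_{k,A},\mathbf{r}_B\rangle$ for $1\le k\le n-2$. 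In particular the full column $\{\langle\mathbf{a}_B,\mathbf{e}\rangle:\mathbf{e}\in\mathcal{E}_A\}$ (and similarly for $\mathbf{r}_B$) is then known, so solving the linear system against the explicit self-Gram of $\mathcal{E}_A$ expresses $\mathbf{a}_B$ and $\mathbf{r}_B$ as quaternionic combinations of $\mathcal{E}_A$, hence locates the $(n-1)$-dimensional positive subspace $\mathrm{span}(\mathbf{a}_B,\mathbf{r}_B)^{\perp}$ containing all the $\mathbf{x}_{k,B}$.

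The crux --- and the step I expect to be the main obstacle --- is the positive-versus-positive block $\langle\mathbf{x}_{j,A},\mathbf{x}_{k,B}\rangle$, which in the weakly non-singular regime of \thmref{thm2} genuinely needed the full list of generalized cross ratios and Goldman eta invariants. Here only $n-2$ of the $n-1$ vectors $\mathbf{x}_{k,B}$ are directly constrained by the $\alpha_k$, so the reconstruction must lean on the ambient geometry: each $\mathbf{x}_{k,B}$ lies in the now-located positive subspace $\mathrm{span}(\mathbf{a}_B,\mathbf{r}_B)^{\perp}$ and its pairings with $\mathbf{a}_A,\mathbf{r}_A$ are known, and I would show, following \cite{gp2}, that the non-singularity hypothesis --- no common totally geodesic hyperplane through the null fixed points, together with the genericity of the flag pairs of \defref{nsl} --- makes the resulting linear system for the coordinates of $\mathbf{x}_{k,B}$ in $\mathcal{E}_A$ non-degenerate, the last eigenvector being pinned by orthogonality inside the positive subspace. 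Solving this system determines every $\mathbf{x}_{k,B}$ and hence all entries $\langle\mathbf{x}_{j,A},\mathbf{x}_{k,B}\rangle$, completing $M$. Throughout, the delicate part is the quaternionic bookkeeping: each product must be taken in the correct order, and the standard-lift normalisation must be propagated consistently so that the reconstructed $M$ is the honest Gram matrix rather than one of its $\Sp(1)$-twists. With $M$ in hand, two representations sharing the invariants of \thmref{qc2} have identical combined Gram matrices, so an $\H$-linear isometry carries the eigenframe of $(\rho(x),\rho(y))$ onto that of $(\rho'(x),\rho'(y))$ while matching the spectral data; this isometry lies in $\Sp(n,1)$ and conjugates one pair to the other, so $\rho$ is determined in $\fX({\mathrm F}_2,\Sp(n,1))$.
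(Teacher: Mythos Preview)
Your route (rebuild the full mixed Gram block $M$, then quote Witt-type uniqueness) is different from what the paper actually does, and your execution has a real gap at the point you flag as ``the crux.''

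The paper never tries to compute the positive--positive entries $\langle\mathbf{x}_{j,A},\mathbf{x}_{k,B}\rangle$. Its argument is a direct comparison: given $(A,B)$ and $(A',B')$ with equal invariants, Cao's theorem furnishes $C\in\Sp(n,1)$ matching the four boundary fixed points; then, for each $k\le n-2$, the equality $\alpha_k(A,B)=\alpha_k(A',B')$ forces the vector $\mathbf v=\mathbf x_{k,B}-C^{-1}(\mathbf x_{k,B'})\lambda$ to be orthogonal to all of $\mathbf a_A,\mathbf r_A,\mathbf a_B,\mathbf r_B$. Non-singularity says precisely that no nonzero vector is orthogonal to all four null eigenvectors (else $\mathbf v^\perp$ would be a proper hyperplane containing the four fixed points), so $\mathbf v=0$ and $C(x_{k,B})=x_{k,B'}$. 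The last eigenvector $x_{n-1,B}$ comes along by orthogonality, and then real traces plus projective points finish via \lemref{lox}. No Gram entries beyond the boundary block and the $\alpha,\beta$ ratios are ever reconstructed.

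Your step 3 asserts that ``the full column $\{\langle\mathbf a_B,\mathbf e\rangle:\mathbf e\in\mathcal E_A\}$ is known''; it is not, since $\langle\mathbf a_B,\mathbf x_{n-1,A}\rangle$ is not encoded by any $\beta_k$. So you cannot write $\mathbf a_B$ in the basis $\mathcal E_A$ at that stage, and the subsequent localisation of $\mathrm{span}(\mathbf a_B,\mathbf r_B)^\perp$ inside $\mathcal E_A$ is unsupported. In step 4 you appeal to a ``non-degenerate linear system'' without naming it; the correct content is exactly the paper's mechanism above: for each $\mathbf x_{k,B}$ you know its Hermitian pairings with $\mathbf a_A,\mathbf r_A$ (from $\alpha_k$) and with $\mathbf a_B,\mathbf r_B$ (both zero), and non-singularity is equivalent to these four null vectors spanning $\H^{n,1}$, so those four pairings pin $\mathbf x_{k,B}$ down. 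If you rewrite your argument to use this spanning statement directly---bypassing the attempt to express $\mathbf a_B,\mathbf r_B$ in $\mathcal E_A$---your approach becomes correct and is then essentially a repackaging of the paper's contradiction argument in Gram-matrix language.
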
 
The degrees of freedom of the above set of coordinates add up to at most $14n-6$ (for each real traces $n+1$, contributing $2 \times (n+1)=2(n+1)$; for the point on the cross ratio variety $5$; for three angular invariants $3$;  for the projective points $4n=2 \times (2n \hbox{ projective points})$);  for the ~$\alpha$ and $\beta$-invariants: $8(n-2)=2 \times 4(n-2)$.  For $n=3$, the degrees of freedom add up to 36, which is the dimension of $\Sp(3,1)$. 
\begin{corollary}\label{qcc2}
Let $\rho: {\rm F}_2 \to {\rm Sp}(3,1)$ be a representation such that $(\rho(x), \rho(y))$ is non-singular.  Then the point $\rho$ in $\fX({\mathrm F}_2, \Sp(3,1))$ is  determined by the following parameters:

\begin{itemize}
	\item [$\bullet$] $\rtr(\rho(x)),~\rtr(\rho(y))$; for k=1,~2,~3,
	\item[$\bullet$]  the angular invariants $\A_k(\rho(x), \rho(y))$,
	\item[$\bullet$] the usual cross ratios  $\X_k(\rho(x), \rho(y))$;
	\item[$\bullet$]  $\alpha_1(\rho(x), \rho(y))$,  $\beta_1(\rho(x), \rho(y))$,
	\item[$\bullet$]the projective points 
	$(p_1(\rho(x)), p_2(\rho(x)), p_3(\rho(x)))$, $(p_1(\rho(y)), p_2(\rho(y)), p_3(\rho(y)))$.  
\end{itemize}
\end{corollary}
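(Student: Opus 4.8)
The plan is to obtain Corollary~\ref{qcc2} as the direct specialization of Theorem~\ref{qc2} to $n=3$, so the entire substantive argument is already contained in the proof of that theorem and what remains is purely bookkeeping. First I would set $n=3$ throughout the parameter list of Theorem~\ref{qc2} and check that each family specializes to exactly the one appearing in the corollary. The real traces $\rtr(\rho(x)), \rtr(\rho(y))$ become elements of $\R^{n+1}=\R^4$, and the triple of usual cross ratios $\X_k$ together with the triple of angular invariants $\A_k$, $k=1,2,3$, carry over unchanged, since these were defined independently of $n$ from the four boundary fixed points $a_{\rho(x)}, r_{\rho(x)}, a_{\rho(y)}, r_{\rho(y)}$ of the pair.

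The only ranges that genuinely collapse are the $\alpha$- and $\beta$-invariants and the projective points. The index range $1\le k\le n-2$ contracts to $k=1$ alone, so that among the generalized cross ratios only $\alpha_1(\rho(x),\rho(y))$ and $\beta_1(\rho(x),\rho(y))$ survive; for $n=3$ a regular loxodromic has $n-1=2$ space-like eigenvectors, and the non-singularity hypothesis (Definition~\ref{nsl1}) guarantees, under the chosen frame of reference, that these two quantities are well defined and non-zero. The projective points number $n=3$ per generator, yielding the tuples $(p_1(\rho(x)),p_2(\rho(x)),p_3(\rho(x)))$ and $(p_1(\rho(y)),p_2(\rho(y)),p_3(\rho(y)))$ recorded in the corollary---one $\C\P^1$-coordinate for each of the two space-like eigenlines and one for the null pair, which matches the count of $n$ projective points attached to a regular loxodromic.

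Having matched the two lists term by term, I would invoke the conclusion of Theorem~\ref{qc2} verbatim: under the convention of taking the standard lift of $a_{\rho(x)}$, these invariants determine $\rho$ in $\fX({\mathrm F}_2,\Sp(3,1))$. The point worth emphasizing, rather than an obstacle, is the dimension bookkeeping that motivates the statement: the general bound $14n-6$ on the degrees of freedom evaluates to $14\cdot 3-6=36$ precisely when $n=3$, and $36=\dim\Sp(3,1)$, so the resulting coordinates are genuinely of Fenchel--Nielsen type. I do not anticipate any real obstacle here, as no new geometry is required; the sole thing to verify carefully is that the frame-fixing convention for $a_{\rho(x)}$ used in Theorem~\ref{qc2} remains compatible with the $n=3$ Gram-matrix reconstruction and that no invariant is silently dropped as the index ranges contract.
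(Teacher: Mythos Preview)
Your proposal is correct and matches the paper's approach exactly: the corollary is stated immediately after Theorem~\ref{qc2} with no separate proof, as it is nothing more than the $n=3$ specialization, and the paper's only additional remark is the dimension count $14n-6=36=\dim\Sp(3,1)$ that you also record.
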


This motivates us to construct a gluing process to glue such a representation and associate  coordinates to generic surface group representations into $\Sp(3,1)$. Let $\Sigma_g$ denote a closed, connected, orientable surface of genus $g \geq 2$. Let $\pi_1(\Sigma_g)$ denote the fundamental group of $\Sigma_g$. Choose $\mathcal C= \{\gamma_j\}$, $j=1,2,\ldots, 3g-3$, a maximal family of simple closed curves on $\Sigma_g$ such that no two of $\gamma_j$ are neither homotopically equivalent, nor homotopically \hbox{trivial}.  The homotopy type of the curves may be considered to be elements of $\pi_1(\Sigma_g)$. We also assume that $g$ of the curves $\gamma_j$ correspond to two boundary components of the same three-holed sphere. Consider discrete, faithful representations $\rho: \pi_1(\Sigma_g) \to {\rm SU}(3,1; \F)$ such that the $3g-3$ group elements $\rho(\gamma_j)$ are loxodromics and each of the groups $\langle \rho(\gamma_k), \rho(\gamma_l) \rangle$ obtained from the given decomposition is  non-singular.    We call such a representation as \emph{non-singular}. We construct `twist-bend' parameters to glue such representations. Complex hyperbolic twist bends for representations into $\SU(3,1)$ were constructed in \cite{gp2}.  However, the method in \cite{gp2} does not generalize to $\Sp(3,1)$. Here, we generalize the approach used in \cite{gk1} to construct the twist-bend parameters. We have noted the construction for representations into $\Sp(3,1)$ for emphasizing the quaternionic hyperbolic case. The same method restricts to $\SU(3,1)$ as well, thus providing an alternative approach to the construction of twist bends in the complex hyperbolic case. Then using standard arguments as in  \cite{pp} or \cite{gk2}, we have the following result. 

\begin{theorem}\label{mth2} 
Let $\Sigma_g$ be a closed orientable surface of genus $g \geq 2$ with a simple curve system $\mathcal C= \{\gamma_j\}$, $j=1,2,\ldots, 3g-3$. Let $\rho: \pi_1(\Sigma_g) \to {\rm Sp}(3,1)$ be a non-singular representation of the surface group $\pi_1(\Sigma_g)$ into ${\rm Sp}(3,1)$. There are $72g-72$ real parameters that determine  $\rho$ in the character variety  ${\rm  Hom}(\pi_1(\Sigma_g), {\rm Sp}(3,1))/{\rm Sp}(3,1)$.  
\end{theorem}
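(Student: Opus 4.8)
The plan is to assemble $\rho$ from the pair-of-pants pieces cut out by $\mathcal C$ in Fenchel--Nielsen fashion, parametrise each piece by Corollary~\ref{qcc2}, introduce twist--bend parameters along the gluing curves, and then match the parameter count against $\dim\fX(\pi_1(\Sigma_g),\Sp(3,1))=(2g-2)\dim\Sp(3,1)=72g-72$.

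First I would record the combinatorics of the decomposition. The $3g-3$ curves $\gamma_j$ split $\Sigma_g$ into $2g-2$ three-holed spheres $P_1,\dots,P_{2g-2}$; encode this by the trivalent dual graph $\Gamma$ (one vertex per pants, one edge per curve) together with a spanning tree. Each $\pi_1(P_v)$ is free on two generators and, by the non-singularity hypothesis, $\rho$ restricts to a non-singular pair on it, so Corollary~\ref{qcc2} attaches to $P_v$ exactly $36=\dim\Sp(3,1)$ real coordinates that recover $\rho|_{\pi_1(P_v)}$ up to conjugacy. I would organise these $36$ numbers into the conjugacy-invariant data of the three boundary holonomies --- each a regular loxodromic, whose conjugacy class is pinned down by $c=4$ real invariants carried by $\rtr$ --- together with the remaining $36-3c=24$ internal shape parameters.

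The heart of the argument, and the step that genuinely differs from the complex case of \cite{gp2}, is the construction of the twist--bend parameters. Cutting along an interior curve $\gamma_e$ presents $\pi_1(\Sigma_g)$ as an amalgamated product (tree edge) or an HNN extension (non-tree edge) over the cyclic group $\langle\gamma_e\rangle$; inserting into the gluing an element $E$ of the centraliser $Z(\rho(\gamma_e))\subset\Sp(3,1)$ preserves the defining relation precisely because $E$ commutes with $\rho(\gamma_e)$, and hence deforms $\rho$ through representations. For a regular loxodromic the centraliser is a Cartan subgroup of real dimension $z=4=\operatorname{rank}\Sp(3,1)$, carrying a compact $U(1)$ factor for each of the two positive-definite eigendirections together with an $\R\times U(1)$ acting on the null attracting/repelling pair. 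I would therefore attach to each $\gamma_e$ the $z=4$ twist--bend coordinates parametrising $Z(\rho(\gamma_e))$, following the approach of \cite{gk1} rather than \cite{gp2}, and check that these flows stay inside $\Sp(3,1)$, preserve weak non-singularity of every adjacent pair, and act effectively, so that distinct twist--bend values yield non-conjugate characters.

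Finally I would count and establish locality. Since two pants sharing a curve must see the same conjugacy class there, the length data is recorded once per interior curve, giving $c(3g-3)=4(3g-3)$ parameters; the internal shapes give $24(2g-2)$; and the twist--bends give $z(3g-3)=4(3g-3)$. The total is $4(3g-3)+24(2g-2)+4(3g-3)=72g-72$, and the reason this matches $(2g-2)\dim\Sp(3,1)$ so cleanly is the coincidence $z=c=\operatorname{rank}\Sp(3,1)=4$, i.e. the number of twist--bend directions equals the number of matching (length) constraints per curve. To conclude that these numbers determine $\rho$, I would argue that the assembly map from coordinate data to $\fX(\pi_1(\Sigma_g),\Sp(3,1))$ is a local homeomorphism: injectivity because the pants coordinates recover each $\rho|_{\pi_1(P_v)}$ up to conjugacy and the twist--bends then fix the relative gluing, and openness by invariance of domain since the domain and target have the same dimension $72g-72$ --- exactly the standard arguments of \cite{pp} and \cite{gk2}. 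The main obstacle I expect is the effectiveness and non-singularity-preservation of the quaternionic twist--bend flows: unlike the complex case the centralising elements $E$ involve the non-commutative eigenvalue (similarity) structure, so one must verify both that they realise the full $4$-dimensional centraliser inside $\Sp(3,1)$ and that the resulting deformation is not absorbed by a global conjugation, which is precisely where the construction must depart from \cite{gp2}.
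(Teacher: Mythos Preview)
Your plan is correct and follows the same broad strategy as the paper: decompose into pants via Corollary~\ref{qcc2}, introduce twist--bends along the gluing curves, and count. The difference lies in the twist--bend bookkeeping. You correctly identify the twist--bend space at each curve as the centraliser $Z(\rho(\gamma_e))$, which for a regular loxodromic in $\Sp(3,1)$ is a $4$-dimensional Cartan subgroup (your computation $U(1)\times U(1)\times(\R\times U(1))$ is right), and you apply this uniformly at all $3g-3$ curves with the clean count $4(3g-3)+24(2g-2)+4(3g-3)=72g-72$. The paper instead attaches \emph{ten} real parameters to each twist--bend --- the four parameters $(t,\psi,\xi_1,\xi_2)$ together with three projective points $k_1,k_2,k_3$ of $K$ --- and introduces twist--bends only at the $g$ handle-closing curves, balancing them against a $10$-parameter constraint per handle; the $2g-3$ tree-edge gluings are absorbed into the statement that the glued $(0,2g)$ surface still carries $36(2g-2)$ parameters. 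The paper's own text notes that ``the projective points of $K$ [are] determined by the projective points of $A$'', so those six extra parameters are redundant and your $4$-dimensional centraliser is the genuine twist--bend freedom; the over-count is harmless only because it is matched by an equal over-count on the constraint side. What the paper's route buys is an explicit list of conjugacy-invariant numerical coordinates for the twist--bend (the $\tilde\X_i(\kappa)$ and $\tilde\A_i(\kappa)$ of Lemma~\ref{tw1}) together with a proof that they pin down $K$ --- precisely the effectiveness step you flagged as the main obstacle, and the place where one must check that the quaternionic twist is not absorbed by a global conjugation.
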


When considering the representations into $\SU(3,1)$, we recover \cite[Thorem 1.3]{gp2}.

\begin{theorem}\label{mth2}
For $g \geq 2$, let $\Sigma_g$ be a closed orientable surface of genus $g$ with a simple curve system $\mathcal C= \{\gamma_j\}$, $j=1,2,\ldots, 3g-3$. Let $\rho: \pi_1(\Sigma_g) \to {\rm SU}(3,1)$ be a non-singular representation of the surface group $\pi_1(\Sigma_g)$ into ${\rm SU}(3,1)$. There are $30g-30$ real parameters that determine  $\rho$ in the character variety  ${\rm  Hom}(\pi_1(\Sigma_g), {\rm SU}(3,1))/{\rm SU}(3,1)$.  
\end{theorem}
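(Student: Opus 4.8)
The plan is to run the entire construction inside $\SU(3,1)\subset\Sp(3,1)$, so that the argument for the preceding quaternionic theorem specializes: over $\C$ all the quaternionic similarity classes become honest conjugacy invariants and the spatial ``projective point'' coordinates collapse, leaving only the numerical data. First I would fix the pants decomposition supplied by $\mathcal C$. The $3g-3$ curves cut $\Sigma_g$ into $2g-2$ three-holed spheres $P_1,\dots,P_{2g-2}$, and since each $\pi_1(P_i)$ is free of rank two, the restriction $\rho|_{\pi_1(P_i)}$ is precisely a non-singular pair in $\SU(3,1)$ in the sense of \defref{nsl1}. By the $15$-parameter corollary for $\SU(3,1)$ (that is, \cite[Theorem 1.1]{gp2}, the complex specialization of \corref{qcc2}), each such pair is determined up to $\SU(3,1)$-conjugacy by $\tr$ and $\sigma$ of its two generators, the cross ratios $\X_k$, and $\alpha_1,\beta_1$.

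The organizational step is to split these $15$ numbers into boundary data and internal data. The three boundary curves of $P_i$ are the loxodromics $\rho(x),\rho(y),\rho((xy)^{-1})$, and each loxodromic conjugacy class in $\SU(3,1)$ is a $3$-real-dimensional family: the determinant-one normalization makes $\sigma$ real and forces the self-inversive symmetry of the characteristic polynomial, so the eigenvalue data has exactly three real degrees of freedom. Thus the three boundary classes account for $9$ of the $15$ parameters and leave a $6$-dimensional internal shape for each pants. Next I would impose the matching conditions dictated by the decomposition (using the assumption that $g$ of the $\gamma_j$ pair boundary components of a common three-holed sphere to set up the amalgamation): a curve $\gamma_j$ is shared across a gluing, and $\rho(\gamma_j)$ must have the same conjugacy class viewed from either adjacent pants, which identifies the two boundary-length triples along $\gamma_j$.

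Having matched conjugacy classes, the remaining freedom in assembling the global representation is the twist-bend along each of the $3g-3$ curves. Following the method of \cite{gk1}, generalized in the quaternionic theorem and here restricted to $\C$, I would realize the twist-bend by inserting at the identification across $\gamma_j$ an element of the centralizer of the regular loxodromic $\rho(\gamma_j)$; a direct diagonalization in the eigenbasis $(a,r,x_1,x_2)$ shows this centralizer is a maximal torus of $\SU(3,1)$ of real dimension $3$, giving three real twist-bend parameters per curve. The global $\rho$ is then reconstructed from the pants groups and these gluing elements through the amalgamation/HNN presentation of $\pi_1(\Sigma_g)$, and standard arguments as in \cite{pp} or \cite{gk2} show that the resulting map into the character variety is a local parametrization. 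The count is then $6(2g-2)=12g-12$ internal shapes, $3(3g-3)=9g-9$ boundary lengths, and $3(3g-3)=9g-9$ twist-bends, totalling $30g-30=(2g-2)\dim\SU(3,1)$, which matches the expected dimension of $\fX(\pi_1(\Sigma_g),\SU(3,1))$.

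The hard part will be the consistency and independence of the coordinates. I must verify that the three twist-bend directions per curve are genuinely transverse to the internal-shape and boundary-length directions, that the inserted centralizer elements respect the flag-matching used to build the per-pants non-singular charts, and that the joint parameter map is a local homeomorphism onto a neighbourhood in the character variety rather than merely a dimension-compatible map. This is exactly where the non-singularity hypothesis does the work: it guarantees that the boundary fixed-point configurations and the generalized cross ratios $\alpha_k,\beta_k$ stay generic and non-vanishing under the twist-bend deformation, so that each pants remains in the domain of its chart throughout the gluing and the coordinates continue to determine $\rho$ locally.
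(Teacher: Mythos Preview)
Your proposal is correct and matches the paper's approach: the paper proves the $\Sp(3,1)$ count and then simply notes that the $\SU(3,1)$ case follows by the same argument restricted over $\C$ (the projective points collapse, the similarity classes become honest invariants), recovering \cite[Theorem~1.3]{gp2}. Your bookkeeping via the classical internal/boundary-length/twist split $6(2g-2)+3(3g-3)+3(3g-3)$ is equivalent to the paper's organization, which counts $15$ parameters per pants and observes that at each gluing the $3$-dimensional boundary-matching constraint cancels the $3$-dimensional twist-bend (the centralizer of a regular loxodromic in $\SU(3,1)$ being a rank-$3$ torus), giving $15(2g-2)=30g-30$.
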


\subsubsection*{Structure of the paper} In \secref{prel}, we briefly recall basic notions and notations. We follow similar notations as in our early papers \cite{gk1} or \cite{gk2}. We recall and re-interpret the projective points in \secref{pp}. In \secref{ns}, we prove \thmref{thm2}. In \secref{mnth}, we prove \thmref{cor2} and \thmref{qc2}. The twist-bend parameters are constructed in \secref{twb} and a sketch of the proof of \thmref{mth2} is given in \secref{twb}. 

\section{Preliminaries}\label{prel}
\subsection{Matrices over the quaternions} 
Let $\V$ be a right vector space over $\H$ and $T$ be a right linear transformation of $\V$. After choosing a basis of $\V$, such a linear transformation can be represented with a $n \times n$ matrix $M_T$ over $\H$, where $n=\dim \V$. The map $T$ is invertible if and only if $M_T$ is invertible.  Suppose $\lambda\in \H^{\ast}$ is a (right) eigenvalue of $T$. Let $v$ be an eigenvector to $\lambda$. Note that for $\mu \in \H^{\ast}$,
$$T(v \mu)=T(v) \mu=(v \lambda )\mu=(v \mu) \mu^{-1} \lambda \mu.$$ Thus, the eigenvalues of $T$ occur in similarity classes and if $v$ is a $\lambda$-eigenvector, then $v \mu \in v \H$ is a $\mu^{-1} \lambda \mu$-eigenvector.
Thus the eigenspace $v \H$ is not uniquely assigned to a single eigenvalue, but to the similarity class of $\lambda$. So the similarity classes of eigenvalues are conjugacy invariants over the quaternions, and notion of characteristic or minimal polynomial is not well-defined. Each similarity class of eigenvalues contains a unique pair of complex conjugate numbers. We shall choose one of these complex numbers $re^{i \theta}$, $\theta \in [0, \pi]$,  to be the representative of its similarity class. We may refer a similarity class representative as `the eigenvalue of $T$', though it should be understood that our reference is towards the similarity class. At places, where we need to distinguish between the similarity class and a representative, we shall denote the similarity class of an eigenvalue representative $\lambda$ by $[\lambda]$. 

\subsection{The hyperbolic space} Let $\F= \H$ or $\C$. 
Let $\V=\F^{n,1}$ be the $n$-dimensional right vector space over $\F$ equipped with the Hermitian form of signature $(n,1)$ given by $$\langle\z,\w\rangle=\w^{\ast}H\z=\bar w_{n+1}z_{1}+\bar w_2 z_2+\cdots+ \bar w_{n} z_{n}+\bar w_1 z_{n+1},$$
where $\ast$ denotes conjugate transpose. The matrix of the Hermitian form is given by
\begin{center}
$H=\left[ \begin{array}{cccc}
            0 & 0 & 1\\
           0 & I_{n-1} & 0 \\
 1 & 0 & 0\\
          \end{array}\right],$
\end{center}
where $I_{n-1}$ is the identity matrix of rank $n-1$.
We consider the following subspaces of $\H^{n,1}:$
$$\V_{-}=\{\z\in \F^{n,1}:\langle\z,\z \rangle<0\}, ~ \V_+=\{\z\in\F^{n,1}:\langle\z,\z \rangle>0\},$$
$$\V_{0}=\{\z \in \F^{n,1}\setminus\{{\bf 0}\}:\langle\z,\z \rangle=0\}.$$
A vector $\z$ in $\F^{n,1}$ is called \emph{positive, negative}  or \emph{null}  depending on whether $\z$ belongs to $\V_+$,   $\V_-$ or  $\V_0$. Let $\P:\F^{n,1}\setminus\{{\bf 0}\}\longrightarrow  \F \P^n$ be the right projection onto the quaternionic projective space. Image of a vector $\z$ will be denoted by $z$.  The quaternionic hyperbolic space $\fh^n$ is defined to be $\P(\V_{-})$. The ideal boundary $\partial\fh^n$  is defined to be $\P (\V_{0})$. So we can write $\fh^n=\P(\V_{-})$ as
$$\fh^n=\{(w_1,\ldots, w_n)\in\H^n \ : \ 2\Re(w_1)+|w_2|^2+\cdots+|w_n|^2<0\},$$
where for a point $\z=\begin{bmatrix}z_1 & z_2 & \ldots & z_{n+1}\end{bmatrix}^T \in \V_- \cup \V_0$, $w_i=z_i z_{n+1}^{-1}$ for $i=1, \ldots, n$. This is the Siegel domain model of $\fh^n$. Similarly one can define the ball model by replacing $H$ with an equivalent Hermitian form $H'$ given by the diagonal matrix: $H'=diag(-1,1, \ldots, 1)$. 
We shall mostly use the Siegel domain model here.

There are two distinguished points in $\V_{0}$ which we denote by  $\bf{o}$ and $\bf\infty,$ given by
$$\bf{o}=\left[\begin{array}{c}
               0\\0\\ \vdots \\ 1\\
              \end{array}\right],
~~ \infty=\left[\begin{array}{c}
               1\\0 \\ \vdots \\0\\
              \end{array}\right].$$\\
Then we can write $\partial\h^n=\P(\V_{0})$ as
$$\partial\fh^n\setminus\{\infty\}=\{(z_1,\ldots,z_n)\in\H^n:2\Re(z_1)+|z_2|^2+\cdots+|z_n|^2=0\}.$$\\
Note that $\overline{\fh^n}=\fh^n \cup \partial \fh^n$. 

 Given a point $z$ of $\overline{\fh^n}\setminus\{\infty\} \subset\F \P^n$ we may lift $z=(z_1,\ldots, z_n)$ to a point $\z$ in $\V$, called the \emph{standard lift} of $z$. It is represented in projective coordinates by
 $$\z=\left[\begin{array}{c}
                z_1\\ \vdots \\ z_n\\1\\
               \end{array}\right].$$
 The Bergman metric in $\fh^n$ is defined in terms of the Hermitian form given by:
$${ds}^2=-\frac{4}{\langle \z,\z \rangle^2} \det \left[\begin{array}{cc}
                                                    \langle \z,\z \rangle & \langle d\z ,\z \rangle\\
                                                    \langle \z,d\z \rangle & \langle d\z,d\z \rangle\\
                                                   \end{array}\right].$$
If $z$ and $w$ in $\fh^n$ correspond to vectors $\z$ and $\w$ in $\V_{-}$, then the Bergman metric is also given by the distance $\rho$:
$$\cosh^2\bigg(\frac{\rho(z,w)}{2}\bigg)=\frac{\langle\z,\w\rangle \langle\w,\z\rangle}{\langle\z,\z\rangle \langle\w,\w\rangle}.$$

More information on the basic formalism of the quaternionic hyperbolic space may be found in \cite{cg}. 

\subsection{Isometries}   \label{ltr}
 Let ${\rm U}(n,1; \F)$ be the isometry group of  the Hermitian form $\langle .,.\rangle$. Each matrix $A$ in ${\rm U}(n,1; \F)$ satisfies the relation $A^{-1}=
H^{-1}A^{\ast}H$, where $A^{\ast}$ is the conjugate transpose of $A$. The isometry group of  $\fh^n$ is the projective unitary group ${\rm PU}(n,1; \F)$, the group ${\rm U }(n,1)$ modulo the center. We denote ${\rm U}(n,1; \C)={\rm U}(n,1)$, and ${\rm U}(n,1; \H)=\Sp(n,1)$. 
\subsection{Hyperbolic elements in $\SU(n,1;\F)$} Let $A$ be hyperbolic in $\SU(n,1;\F)$.  
 Let $a_A \in \partial\fh^n$ be the \emph{ attracting fixed point } of $A$ that corresponds to the eigenvalue $re^{i \theta}$, $r<1$,  and let $r_A \in \partial\fh^n$ be the \emph{repelling fixed point} corresponding to the eigenvalue $r^{-1}e^{i \theta}$. Let $a_A$ and $r_A$ lift to eigenvectors $\a_A$ and $\r_A$ respectively.  Let $\x_{j,A}$ be an eigenvector corresponding to $e^{i \phi_j}$, $j=1, \dots, n-1$.  The points $x_{j,A}$, $j=1, \ldots, n-1$ on $\P(\V_+)$ are  the space-like (or positive-definite) projective fixed points of $A$. Define $ E_A(r, \theta, \phi_1, \ldots, \phi_{n-1})$ as
\begin{equation}\label{li1}
 E_A(r, \theta,\phi_1, \ldots, \phi_{n-1})=\begin{bmatrix} re^{i \theta} & 0 & \ldots  & 0 &0  \\ 0 & e^{i \phi_1} & \ldots & 0  & 0\\ & &  \ddots & & & \\ 0 & 0 & \ldots & e^{i \phi_{n-1}}& 0 \\ 0 & 0 & \ldots & 0 & r^{-1} e^{ i \theta} \end{bmatrix} 
\end{equation}
 Let $C_A=\begin{bmatrix}   \a_A & \x_{1,A} & \ldots & \x_{n-1,A} & \r_A
  \end{bmatrix}$ be the  matrix corresponding to the eigenvectors. We can choose $C_A$ to be an element of $\Sp(n,1)$ by normalizing the eigenvectors:
$$\langle \a_A, \r_A \rangle=1,~\langle \textbf{x}_{i, A},\textbf{x}_{i,A} \rangle=1, \hbox{ } i=1, \ldots, n-1.$$
 Then $A=C_A E_A(r, \theta, \phi_1,\ldots, \phi_{n-1}) C_A^{-1}$. 
\begin{lemma}\cite{cg}\label{hycl}  {\rm (Chen-Greenberg )}
 Two hyperbolic elements in $\SU(n,1; \F)$  are conjugate if and only if they have the same similarity classes of eigenvalues.
\end{lemma}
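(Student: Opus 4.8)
The plan is to derive both directions from the spectral normal form recorded immediately above the statement, namely $A = C_A E_A(r,\theta,\phi_1,\ldots,\phi_{n-1}) C_A^{-1}$ with $C_A \in \SU(n,1;\F)$. The key observation is that the diagonal matrix $E_A$ is itself an element of $\U(n,1;\F)$ -- a direct check against $H$ gives $E_A^\ast H E_A = H$ -- and that it is manufactured purely from the similarity-class data $r,\theta,\phi_1,\ldots,\phi_{n-1}$. Thus the whole assertion reduces to showing that every hyperbolic $A$ is conjugate, \emph{inside} $\SU(n,1;\F)$, to this canonical $E_A$. Two hyperbolic elements with identical similarity classes of eigenvalues would then be conjugate to one common normal form, hence to each other.

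Necessity is formal. If $B = gAg^{-1}$ with $g \in \SU(n,1;\F)$ and $Av = v\lambda$, then $B(gv) = (gv)\lambda$, so $g$ carries each right eigenspace of $A$ isomorphically onto the eigenspace of $B$ for the literally equal quaternion $\lambda$; hence $A$ and $B$ exhibit the same similarity classes with equal eigenspace dimensions. Over $\C$ this is the invariance of the characteristic polynomial under conjugation, and over $\H$ it is the statement recorded in the preliminaries that similarity classes of eigenvalues are conjugacy invariants.

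For sufficiency I build $C_A$ explicitly. A hyperbolic isometry is semisimple, so $\F^{n,1}$ splits into the two null lines $\a_A\F$, $\r_A\F$ together with the eigenspaces of the unit-modulus classes. The isometry relation gives, for eigenvectors $u,v$ with chosen complex eigenvalue representatives $\lambda_u,\lambda_v$, the identity $\overline{\lambda_v}\,\langle u,v\rangle\,\lambda_u = \langle u,v\rangle$. Taking $u=v$ forces $\langle \a_A,\a_A\rangle = \langle \r_A,\r_A\rangle = 0$ (since $|re^{i\theta}|\neq 1$) and leaves the unit-modulus eigenvectors of positive norm, while the cross-relations force orthogonality of eigenvectors attached to distinct representatives, the sole surviving off-diagonal pairing being $\langle \a_A,\r_A\rangle \neq 0$. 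I can therefore rescale to achieve $\langle \a_A,\r_A\rangle = 1$ and $\langle \x_{i,A},\x_{i,A}\rangle = 1$, which is exactly the condition placing $C_A$ in the isometry group; within a repeated unit-modulus class one first applies Gram--Schmidt with respect to the restricted (positive definite) form. Finally, choosing in each eigenspace the eigenvector on which $A$ acts on the right by the distinguished representative -- possible because $Av=v\lambda$ with $\lambda = s^{-1}(re^{i\theta})s$ yields $A(vs^{-1}) = (vs^{-1})(re^{i\theta})$ -- makes the resulting diagonal matrix equal to $E_A$. This realizes $A = C_A E_A C_A^{-1}$, and likewise $B = C_B E_B C_B^{-1}$ with $E_A = E_B$, so $C_B C_A^{-1} \in \SU(n,1;\F)$ conjugates $A$ to $B$.

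The step I expect to be the main obstacle is guaranteeing that the conjugator genuinely lands in $\SU(n,1;\F)$, and here the quaternionic case is subtler than the complex one in two respects. First, orthogonality of distinct eigenspaces is not automatic over $\H$: the relation $\overline{\lambda_v}\,c\,\lambda_u = c$ admits nonzero quaternionic solutions $c$ exactly when $\lambda_v = \overline{\lambda_u}$, so one must use that the representatives are normalized into the closed upper half-plane ($\theta,\phi_j \in [0,\pi]$) to rule out such anti-conjugate coincidences for genuinely distinct classes. Second, in the complex case the normalized $C_A$ lies a priori only in $\U(n,1)$; since the central scalars $e^{i\psi}I$ belong to $\U(n,1)$, one rescales $C_B C_A^{-1}$ by a suitable such scalar to force determinant one without altering the conjugation, upgrading $\U(n,1)$-conjugacy to $\SU(n,1)$-conjugacy. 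In the quaternionic case $\Sp(n,1) = \U(n,1;\H)$ carries no determinant constraint, so this second point is vacuous.
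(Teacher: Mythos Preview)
The paper does not prove this lemma at all; it is quoted from Chen--Greenberg \cite{cg} as a black box. Your argument is the standard spectral-normal-form proof and is correct. The one place where a reader might pause is the Gram--Schmidt step inside a repeated unit-modulus class: over $\H$, right-multiplying an eigenvector $w$ by a quaternion $\alpha$ replaces the eigenvalue $\lambda$ by $\alpha^{-1}\lambda\alpha$, so naive Gram--Schmidt does not preserve the chosen representative. Your fix---orthonormalize first, then rescale each resulting vector by a unit quaternion to restore the fixed representative $e^{i\phi_j}$---works because unit rescalings keep the basis orthonormal; you might make that order of operations explicit. Otherwise the proposal is sound, including the $\SU$ versus $\U$ adjustment in the complex case and the upper-half-plane normalization ruling out the spurious $\lambda_v=\overline{\lambda_u}$ non-orthogonality.
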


\begin{definition} 
 Let $A$ be a hyperbolic element in $\SU(n,1;\F)$. Let $\lambda$ represents an eigenvalue from the similarity class of eigenvalues $[\lambda]$ of $A$. Let $\x$ be a $\lambda$-eigenvector. Then $\x$ defines a point $x$  on $\F\P^n$ that is either a point on  $\partial \fh^n$ or, a point in $\P(\V_{+})$. The lift of $x$ in $\F^{n,1}$ is the quaternionic line $\x \F$. We call $x$ as a \emph{projective fixed  point} of $A$ corresponding to $[\lambda]$. If $A$ is regular, it fixes exactly $n+1$ points on $\P(\V)$ and thus, it has $n+1$ projective fixed points. 
\end{definition} 

\begin{remark} 
We emphasize here that the projective fixed points of $A$ are not the same as the projective points of $A$. The notion of the projective points of $A$ is elaborated in \secref{pp}. 
\end{remark}

\begin{lemma}\label{emb}
The group $\Sp(n,1)$ can be embedded in the group ${\rm GL}(2n+2, \C)$.
\end{lemma}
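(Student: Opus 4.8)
The plan is to use the standard block realization of quaternionic matrices as complex matrices of twice the size. Since $\Sp(n,1)$ sits inside $\GL(n+1,\H)$ as the stabilizer of the Hermitian form, it suffices to embed the full matrix algebra $\mathrm{M}_{n+1}(\H)$ into $\mathrm{M}_{2(n+1)}(\C)$ by an injective unital ring homomorphism and then restrict to invertible elements.

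First I would fix the decomposition $\H = \C \oplus \C j$ and write each quaternion as $q = z_1 + z_2 j$ with $z_1, z_2 \in \C$. Viewing $\H$ as a right $\C$-vector space with ordered basis $\{1, j\}$, I would compute the matrix of the left-multiplication operator $L_q \colon x \mapsto qx$. Using the identity $jz = \bar z\, j$ for $z \in \C$, one finds $q \cdot 1 = 1\cdot z_1 + j \cdot \bar z_2$ and $q\cdot j = 1 \cdot(-z_2) + j\cdot \bar z_1$, so that
\[
\phi(q) = \begin{pmatrix} z_1 & -z_2 \\ \bar z_2 & \bar z_1\end{pmatrix}.
\]
Because $L_{qq'} = L_q \circ L_{q'}$, the assignment $\phi$ is automatically multiplicative, and it is visibly additive and unital; injectivity is immediate since $q$ is recovered from $z_1$ and $z_2$. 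This is precisely the complexification $A \mapsto A_{\C}$ alluded to in the introduction, and it is compatible with the paper's convention that right $\H$-linear maps act by left matrix multiplication.

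Next I would extend $\phi$ entrywise to $\Phi \colon \mathrm{M}_{n+1}(\H) \to \mathrm{M}_{2(n+1)}(\C)$, replacing each quaternionic entry $M_{ab}$ by the $2\times 2$ block $\phi(M_{ab})$. The point to verify is that $\Phi$ respects matrix multiplication: the $(a,b)$ block of $\Phi(MN)$ equals $\sum_c \phi(M_{ac})\phi(N_{cb}) = \phi\bigl(\sum_c M_{ac}N_{cb}\bigr)$, which uses only the additivity and multiplicativity of $\phi$ established above. Hence $\Phi$ is an injective unital ring homomorphism, and in particular $\Phi(I_{n+1}) = I_{2(n+1)}$.

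Finally, since a unital ring homomorphism carries units to units and preserves inverses, $\Phi$ maps $\GL(n+1,\H)$ into $\GL(2(n+1),\C)=\GL(2n+2,\C)$ and restricts there to an injective group homomorphism. As $\Sp(n,1) \subset \GL(n+1,\H)$, the composite is the desired embedding. The only genuinely delicate point is the bookkeeping of conventions: one must keep the left/right scalar choices consistent so that $\phi$ comes out a homomorphism rather than an anti-homomorphism. With the right-module convention used throughout the paper, left-multiplication operators compose in the correct order and no transpose is needed, so the construction goes through verbatim.
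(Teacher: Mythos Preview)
Your proof is correct and follows the same standard approach as the paper: both use the decomposition $\H=\C\oplus \C j$ (the paper writes $\H=\C\oplus j\C$) and the associated $2\times 2$ complex block realization of quaternionic matrices. The only cosmetic difference is that the paper decomposes the whole matrix as $A=A_1+jA_2$ and writes the image as the single $2\times 2$ block matrix $\begin{pmatrix} A_1 & -\overline{A_2}\\ A_2 & \overline{A_1}\end{pmatrix}$, whereas you replace each entry by its $2\times 2$ block; the two images differ only by a fixed permutation of rows and columns, and your version supplies the verification of multiplicativity and injectivity that the paper leaves implicit.
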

\begin{proof}
Write $\H=\C\oplus{\bf j}  \C$. For $A\in \Sp(n,1)$, express $A=A_1+{\bf j}A_2$,
{where} $ A_1, A_2\in M_{n+1}(\C)$. The correspondence $A \mapsto A_{\C}$, where 
\begin{equation}\label{crep} A_{\C}= \begin{bmatrix}  A_1 &  -\overline{A_2} \\
                          {A_2}  & \overline{A_1}    \end{bmatrix},                   
\end{equation}
embeds $\Sp(n,1)$ into ${\rm GL}(2n+2, \C)$. 
\end{proof}
The following lemma is a special case of \cite[Theorem 3.1]{gop}. 
\begin{lemma}\label{rt}
Let $A$ be an element in $\Sp(n,1)$. Let $A_{\C}$ be the
corresponding element in ${\rm GL}(2n+2, \C)$. The characteristic polynomial
of $A_{\C}$ is of the form
\begin{equation*}\chi_A(x)=\sum_{j=0}^{2n+2} a_j x^{2(n+1)-j},\end{equation*}
where $a_0=1=a_{2n+2}$ and for $1 \leq j \leq n+1$, $a_j=a_{2(n+1)-j}$.  Write     \hbox{$\chi_A(x)=x^{n+1} g(x+x^{-1})$}. Let $\Delta$ be the negative of the discriminant of the polynomial $g_A(t)=g(x+x^{-1})$. Then $A$ is  regular loxodromic if and only if, $\Delta >0$ and $  \sum_{j=0}^n  a_j \neq -\frac{1}{2} a_{n+1} \neq  \sum_{j=0}^n (-1)^{n+1-j}  a_j$.
The conjugacy class of  $A$ is determined by the real numbers $a_j$, $1 \leq j \leq n+1$. 
\end{lemma}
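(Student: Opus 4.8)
The plan is to extract everything from the spectrum of $A_\C$ together with its two symmetries, and then to translate each of the three analytic conditions into a transparent statement about eigenvalues. First I would record the symmetries of the spectrum of $A_\C$. Since $A_\C$ is built from a quaternionic matrix through \eqnref{crep}, it commutes with the standard antilinear quaternionic structure, so its spectrum is invariant under $\lambda\mapsto\bar\lambda$ and $\chi_A$ has real coefficients. Since $A\in\Sp(n,1)$ satisfies $A^{-1}=H^{-1}A^{\ast}H$, complexifying gives $A_\C^{-1}=H_\C^{-1}A_\C^{\ast}H_\C$, so $A_\C$ is conjugate to $(A_\C^{\ast})^{-1}$ and its spectrum is invariant under $\lambda\mapsto\bar\lambda^{-1}$; composing the two symmetries gives invariance under $\lambda\mapsto\lambda^{-1}$. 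With $\det A_\C=1$ (hence $a_0=a_{2n+2}=1$) and monicity, this invariance is exactly the palindromic relation $a_j=a_{2(n+1)-j}$. Any even-degree palindromic polynomial factors through $t=x+x^{-1}$, producing the monic degree-$(n+1)$ polynomial $g_A$ with $\chi_A(x)=x^{n+1}g_A(x+x^{-1})$; its roots are the numbers $\lambda+\lambda^{-1}$ as $\lambda$ ranges over one representative of each reciprocal pair $\{\lambda,\lambda^{-1}\}$ in the spectrum, and $g_A$ has real coefficients.

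Next I would make the spectrum explicit. A regular hyperbolic $A$ has similarity classes $[re^{i\theta}]$, $[r^{-1}e^{i\theta}]$ and $[e^{i\phi_1}],\dots,[e^{i\phi_{n-1}}]$; each one-dimensional quaternionic eigenspace contributes a complex-conjugate pair to $A_\C$, so the spectrum of $A_\C$ is $\{re^{\pm i\theta},\,r^{-1}e^{\pm i\theta},\,e^{\pm i\phi_j}\}$. Grouping into reciprocal pairs, the roots of $g_A$ are $\tau=re^{i\theta}+r^{-1}e^{-i\theta}$, its conjugate $\bar\tau$, and the $n-1$ real numbers $2\cos\phi_j$. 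Because $\tau$ recovers the classes $[re^{i\theta}]$ and $[r^{-1}e^{i\theta}]$, and each real root recovers the corresponding $[e^{i\phi_j}]$, the root data of $g_A$ and the spectrum of $A$ up to similarity determine one another. Since $a_0=1$, the coefficients $a_1,\dots,a_{n+1}$ determine $g_A$, hence its roots, hence all similarity classes; conversely the classes determine the $a_j$. By Chen--Greenberg (\lemref{hycl}) this already proves the last assertion, that $a_1,\dots,a_{n+1}$ determine the conjugacy class of $A$.

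For the equivalence I would analyse the three conditions. Using the palindromic relation one checks that $\chi_A(1)=a_{n+1}+2\sum_{j=0}^{n}a_j$ and $\chi_A(-1)=(-1)^{n+1}a_{n+1}+2\sum_{j=0}^{n}(-1)^{j}a_j$, so the two stated inequalities are precisely $\chi_A(1)\neq0$ and $\chi_A(-1)\neq0$; equivalently $\pm1$ are not eigenvalues of $A_\C$, i.e. no $e^{i\phi_j}=\pm1$. For the discriminant, $\Delta\neq0$ is equivalent to all roots of $g_A$ being simple, and since $g_A$ is real and monic the sign of its discriminant equals $(-1)^{c}$, where $c$ is the number of complex-conjugate pairs among its roots; thus $\Delta>0$ means $c$ is odd and the roots are simple. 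The decisive geometric input is that an element of $\Sp(n,1)$ can have at most one reciprocal pair of eigenvalues of modulus $\neq1$: an eigenvector with $|\lambda|\neq1$ is necessarily null (from $\langle\v,\v\rangle=\bar\lambda\langle\v,\v\rangle\lambda=|\lambda|^2\langle\v,\v\rangle$), and the attracting/repelling null directions span a single signature-$(1,1)$ block, which is all the signature $(n,1)$ permits. Hence $c\in\{0,1\}$, so $\Delta>0$ forces $c=1$ --- a single complex pair $\{\tau,\bar\tau\}$ with $r\neq1$ and $\theta\neq0,\pi$ --- together with all roots simple, so that $A$ is hyperbolic, regular, and has non-real translation eigenvalues. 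Adding $\chi_A(\pm1)\neq0$ removes the only eigenvalues that could still be real, namely $e^{i\phi_j}=\pm1$, leaving no real eigenvalue; thus $A$ is regular loxodromic. The converse direction is immediate from the same description.

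I expect the discriminant-sign step to be the main obstacle: one must justify both the sign rule $\mathrm{sign}\,\mathrm{disc}(g_A)=(-1)^{c}$ and the reduction $c\le1$, the latter being the only place where the geometry of $\Sp(n,1)$ (null eigenvectors and the single signature-$(1,1)$ hyperbolic block) enters rather than formal palindromic algebra. Since the statement is a special case of \cite[Theorem 3.1]{gop}, an alternative is to quote from there the form of $\chi_A$ and the discriminant criterion, and to supply only the signature-$(n,1)$ reduction $c\le1$ together with the elementary identities identifying the two inequalities with $\chi_A(\pm1)\neq0$.
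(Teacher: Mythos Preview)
Your argument is correct. The paper's own proof is much terser: it simply cites \cite[Theorem~3.1]{gop} for the equivalence ``$A$ is regular hyperbolic $\iff \Delta>0$'' and then observes that $A$ has no eigenvalue $\pm1$ if and only if $g(\pm2)\neq0$, which it expands to the two displayed inequalities. You instead unpack the content of that citation: the palindromic factorisation $\chi_A(x)=x^{n+1}g_A(x+x^{-1})$, the sign rule $\operatorname{sign}\operatorname{disc}(g_A)=(-1)^c$, and the signature-$(n,1)$ constraint $c\le1$ forcing $c=1$ when $\Delta>0$. This is a genuinely more self-contained route; its payoff is that the reader sees exactly where the geometry of $\Sp(n,1)$ enters (the bound $c\le1$ via null eigenvectors and the maximal isotropic dimension), whereas the paper's proof hides this inside the reference. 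Your identification of the inequalities with $\chi_A(\pm1)\neq0$ is equivalent to the paper's $g(\pm2)\neq0$, since $\chi_A(\pm1)=(\pm1)^{n+1}g(\pm2)$. You also make explicit the conjugacy-class assertion via Chen--Greenberg, which the paper leaves implicit. Your final paragraph already anticipates the paper's shortcut of quoting \cite{gop}; either route is fine, and your version has the advantage of being independent of that source.
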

\begin{proof} Note that $g(x+x^{-1})=\sum_{j=0}^n (x^{n+1-j}+ x^{-(n+1-j)}) + a_{n+1}$. 
It is proved in \cite[Theorem 3.1]{gop} that $A$ is regular hyperbolic if and only if $\Delta>0$. Now, $A$ has no eigenvalue $\pm 1$ if and only if $g(\pm 2) \neq 0$, i.e.
$a_{n+1} + 2 \sum_{j=0}^n  a_j \neq 0 \neq a_{n+1} + 2 \sum_{j=0}^n (-1)^{n+1-j} a_j$.
\end{proof} 
\begin{definition}
Let $A$ be a regular loxodromic element in $\Sp(n,1)$. The $(n+1)$-tuples of real numbers $(a_1, \ldots, a_{n+1})$ as in  \lemref{rt} will be called the  \emph{real trace} of $A$ and we shall denote it by $tr_{\R}(A)$.
\end{definition}
\subsection{Useful results} 
We shall use the following result by Cao \cite{cao} that determines quadruples of points on  $\partial \h^n$. We refer to \cite{cao} or \cite{ak} for the basic notions of angular invariants. For the notations used in the following statement, see \cite[Section 2]{gk2}. 
\begin{theorem}\label{cao} {\rm  \cite{cao}}
Let $Z=(z_1, z_2, z_3, z_4)$ and $W=(w_1, w_2, w_3, w_4)$ be two quadruples of pairwise distinct points in $\partial \h^n$. Then there exists an isometry $h \in \Sp(n,1)$ such that $h(z_i)=w_i$, $i=1,2,3,4$, if and only if the following conditions hold:
\begin{enumerate}
\item For $j=1,2,3$, $\X_j(z_1, z_2, z_3, z_4)$ and $~\X_j(w_1, w_2, w_3, w_4)$ belong to the same \hbox{similarity}  class.

\item $\A(z_1, z_2, z_3)=\A(w_1, w_2, w_3)$, ~$\A(z_1, z_2, z_4)=\A(w_1, w_2, w_4)$, ~ $\A(z_2, z_3, z_4)=\A(w_2, w_3, w_4)$.
\end{enumerate}
\end{theorem}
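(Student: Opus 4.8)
The plan is to recast the statement as a rigidity property of the Gram matrix of the four boundary points and then to recover that matrix from the listed invariants. Fix null lifts $\z_1,\dots,\z_4\in\V_0$ of the $z_i$ and $\w_1,\dots,\w_4\in\V_0$ of the $w_i$, and form the Hermitian Gram matrices $G=(\langle\z_i,\z_j\rangle)$ and $G'=(\langle\w_i,\w_j\rangle)$, each with vanishing diagonal since the points are null. The necessity of conditions (1) and (2) is immediate: an isometry $h\in\Sp(n,1)$ with $h(z_i)=w_i$ sends each lift $\z_i$ to a scalar multiple $\w_i\nu_i$ with $\nu_i\in\H^{*}$, and, as recalled after their definitions in \eqref{ecr} and \eqref{ainv}, the similarity classes of the cross ratios $\X_j$ and the angular invariants $\A$ are unchanged both under an ambient isometry and under such a change of lift. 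Hence the two quadruples share the same invariants.

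For sufficiency the key reduction is that $h$ exists if and only if the lifts of the $w_i$ can be rescaled, $\w_i\mapsto\w_i\mu_i$ with $\mu_i\in\H^{*}$, so that the two Gram matrices become literally equal, i.e.
\begin{equation*}
\langle\z_i,\z_j\rangle=\bar\mu_j\,\langle\w_i,\w_j\rangle\,\mu_i\qquad(1\le i,j\le 4).
\end{equation*}
Indeed, once the Gram matrices agree, the right-linear map $\z_i\mapsto\w_i\mu_i$ is a form-preserving bijection between the spans $U=\mathrm{span}_{\H}\{\z_i\}$ and $U'=\mathrm{span}_{\H}\{\w_i\mu_i\}$, each of $\H$-dimension at most four; since $\Sp(n,1)=\U(n,1;\H)$, Witt's extension theorem for nondegenerate Hermitian forms over $\H$ promotes this isometry of subspaces to an element $h\in\Sp(n,1)$, which realizes $h(z_i)=w_i$. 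The reverse implication is the computation of the previous paragraph. Thus the theorem is equivalent to the claim that conditions (1) and (2) force $G$ and $G'$ to lie in the same orbit of the gauge action $G\mapsto(\bar\mu_j\,G_{ij}\,\mu_i)$.

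To establish this I would first exhaust part of the gauge freedom to normalize $G$: rescaling three of the lifts lets me arrange three off-diagonal entries, say $\langle\z_1,\z_2\rangle$, $\langle\z_2,\z_3\rangle$ and $\langle\z_3,\z_4\rangle$, to equal $1$, using the positive-real and unit-quaternion parts of $\mu_2,\mu_3,\mu_4$ to cancel the modulus and the argument of each chosen entry; these entries are nonzero because the form has Witt index one, so distinct null lines are never orthogonal. In this normal form the definitions \eqref{ecr} and \eqref{ainv} become explicit expressions for the three remaining Hermitian entries and their triple products in terms of the data in (1) and (2): the moduli and real parts of the $\X_j$ fix the rescaling-invariant ratios of the off-diagonal entries, while the three angular invariants fix the relative arguments of the triangle products $\langle\z_i,\z_j,\z_k\rangle$ that the similarity classes alone cannot detect. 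Matching these data between the two quadruples then forces the normalized entries of $G$ and $G'$ to agree up to the residual unit-quaternion conjugations, which are exactly absorbed by the leftover gauge, producing the desired $\mu_i$.

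The hard part is precisely this last step of bookkeeping over the noncommutative division ring $\H$. Because a quaternionic cross ratio is pinned down only up to a similarity class, knowing $\Re(\X_j)$ and $|\X_j|$ fixes each relevant Gram quantity only up to conjugation by some unit quaternion, and a priori the conjugating factors vary from entry to entry; the content of the theorem is that they can be made simultaneously consistent, i.e.\ realized by a single tuple $(\mu_1,\mu_2,\mu_3,\mu_4)$. The three angular invariants of condition (2) are what rigidify this: they fix the real phase of each triangle product $\langle\z_i,\z_j,\z_k\rangle$ and thereby kill the relative conjugation ambiguity around each of the triangles $(z_1,z_2,z_3)$, $(z_1,z_2,z_4)$ and $(z_2,z_3,z_4)$, leaving a globally consistent choice of lifts. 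I would organize the verification by treating these three triangles in turn, fixing one more lift at each stage, and then checking that the full list of prescribed invariants is consistent rather than overdetermined; this final compatibility check, which reflects the semi-algebraic relations among the cross ratios established by Platis, is the genuinely delicate point of the argument.
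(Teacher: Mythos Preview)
The paper does not prove this theorem at all: it is quoted verbatim from Cao \cite{cao} in the preliminaries (Section~2) as a background result and is only \emph{used} later, in the proof of \lemref{tw1} and in the quaternionic analogue of \lemref{csprp1}. So there is no ``paper's own proof'' to compare against.

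That said, your outline is the right shape and is essentially the strategy Cao follows: reduce to gauge-equivalence of Gram matrices, spend the rescaling freedom on a tree of off-diagonal entries, and then read the remaining entries off from the invariants. Two remarks are in order. First, what you have written is explicitly a plan rather than a proof: you say ``I would organize the verification by\dots'' and flag the simultaneous-consistency step as ``the genuinely delicate point,'' but you do not carry it out. Over $\H$ that step is not routine bookkeeping; it is exactly where the work lives, because each $\X_j$ is only known up to an independent $\Sp(1)$-conjugation, and one must show (this is the content of Cao's argument) that the three angular invariants in (2) are precisely what is needed to synchronize those three conjugating unit quaternions into a single gauge $(\mu_1,\mu_2,\mu_3,\mu_4)$. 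Second, your normalization paragraph is slightly loose: with four null lifts you have four quaternionic scalars of freedom, not three, and over a noncommutative division ring one must be careful that setting, say, $\langle\z_1,\z_2\rangle=\langle\z_2,\z_3\rangle=\langle\z_3,\z_4\rangle=1$ really can be achieved by \emph{right} multiplications $\z_i\mapsto\z_i\mu_i$ (it can, proceeding along the chain, but the order matters and the residual freedom is a simultaneous $\Sp(1)$-conjugation, which you should make explicit before invoking it at the end).

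In short: correct architecture, but the proof is not yet written; and since the present paper only cites the result, completing it would mean reproducing the core of \cite{cao}.
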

Cao also proved that, for $n \geq 3$,  the moduli space of $\Sp(n,1)$-congruence classes of points is homeomorphic to a semi-algebraic subspace of $\C^3 \times \R \times \R$ defined by these invariants.

\medskip In the complex hyperbolic set up, the moduli of ordered quadruples of points was obtained by Cunha and Gusevskii. We recall their result.
\begin{theorem}\cite{cugu2}\label{cg22}
Let $Z=(z_1, z_2, z_3, z_4)$ and $W=(w_1, w_2, w_3, w_4)$ be two quadruple of pairwise distinct points in $\partial \ch^n$. Then there exists an isometry $h \in \SU(n,1)$ such that $h(z_i)=w_i$, $i=1,2,3,4$, if and only if the following conditions hold:
\begin{enumerate}
\item $\A(z_1, z_2, z_3)=\A(w_1, w_2, w_3)$.
\item $\X(z_1, z_2, z_3, z_4)=\X(w_1, w_2, w_3, w_4)$, $\X(z_1, z_4, z_2, z_3)=\X(w_1, w_4, w_2, w_3)$. 
\end{enumerate}
Further, these invariants $(\X(z_1, z_2, z_3, z_4), \X(z_1, z_4, z_2, z_3), \A(z_1, z_2, z_3))$ form a semi-algebraic subset of $\C^2\setminus \{\{0\} \times \R\}$ which is homeomorphic to the moduli space. 
\end{theorem}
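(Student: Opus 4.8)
The plan is to convert the congruence question into a statement about Gram matrices and then to read off the Gram matrix from the three listed invariants. Fix null lifts $\z_1,\dots,\z_4\in\C^{n,1}$ of $z_1,\dots,z_4$ and put $g_{ij}=\langle\z_i,\z_j\rangle$; the Gram matrix $G=(g_{ij})$ is Hermitian with vanishing diagonal, and since the $z_i$ are distinct boundary points and the signature $(n,1)$ form carries no totally isotropic $2$-plane, every off-diagonal entry $g_{ij}$ is nonzero. Rescaling lifts by $\z_i\mapsto\lambda_i\z_i$, $\lambda_i\in\C^{\ast}$, replaces $G$ by $D^{\ast}GD$ with $D=\mathrm{diag}(\lambda_1,\dots,\lambda_4)$. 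The first step is the standard reduction: two quadruples are $\SU(n,1)$-congruent if and only if their Gram matrices agree up to such a diagonal rescaling. One direction is immediate, as an isometry preserves $\langle\cdot,\cdot\rangle$ and sends lifts to lifts; the converse is Witt's extension theorem applied to the two frames (for $n\ge 3$ the lifts span a subspace of dimension $\le n+1$ on which Witt applies), yielding a $\U(n,1)$-map between them whose determinant is then corrected inside $\SU(n,1)$. This reduction already gives necessity of (1)--(2): each cross ratio and the angular invariant is by design invariant both under $\SU(n,1)$ and under $D$, hence is shared by congruent quadruples.

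For sufficiency I would fix the gauge by using $\lambda_2,\lambda_3,\lambda_4$ to normalize $g_{12}=g_{13}=g_{14}=1$; the remaining freedom is one $\lambda\in\C^{\ast}$ whose phase acts trivially and whose modulus rescales $(g_{23},g_{24},g_{34})$ by a common positive real. Substituting into the definitions gives, in this normalization,
\[
\langle \z_1,\z_2,\z_3\rangle = g_{23}, \qquad
\X(z_1,z_2,z_3,z_4)=\overline{g_{24}/g_{23}}, \qquad
\X(z_1,z_4,z_2,z_3)=g_{34}/g_{24}.
\]
Thus $\A(z_1,z_2,z_3)$ fixes the argument of $g_{23}$ (up to conjugation, because of the $\arccos$), the first cross ratio then fixes $g_{24}/g_{23}$, and the second fixes $g_{34}/g_{24}$; spending the residual positive scale on $|g_{23}|=1$ pins down $g_{23},g_{24},g_{34}$. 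Hence $(\X(z_1,z_2,z_3,z_4),\X(z_1,z_4,z_2,z_3),\A(z_1,z_2,z_3))$ reconstructs the normalized $G$, and by the reduction this determines the $\SU(n,1)$-orbit, giving sufficiency.

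For the final assertion I would study the image of the invariant map. The reconstruction shows that this map is a bijection from the moduli space onto the set of triples for which the recovered matrix $G$ is genuinely the Gram matrix of four distinct null vectors in signature $(n,1)$; concretely, $G$ must be a Hermitian $4\times4$ matrix with zero diagonal possessing exactly one negative eigenvalue, with no principal $2\times2$ or $3\times3$ block forcing a coincidence of points. These are sign conditions on the principal minors of $G$, i.e.\ on a discriminant polynomial in $\X$ and $\A$, so the image is semi-algebraic and the excluded degenerate locus is precisely the removed set $\{0\}\times\R$ of \cite{cugu2}. Since both the invariant map and its inverse --- the explicit Gram entries followed by a continuous frame choice, e.g.\ a form-adapted Gram--Schmidt --- are continuous, the bijection is a homeomorphism onto the moduli space.

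The step I expect to be the crux is not the reconstruction, which is a one-line computation once the gauge is fixed, but the precise description of the image together with the homeomorphism claim. Two points deserve care: the angular invariant only determines $g_{23}$ up to complex conjugation, so one must verify that exactly one of the two resulting matrices satisfies the signature constraint (the other corresponding to an anti-holomorphically related configuration), and one must translate that signature constraint into the explicit semi-algebraic inequalities, checking that their boundary matches exactly the coincident or chain-aligned quadruples. Establishing properness of the continuous bijection at this boundary is what upgrades it to a homeomorphism.
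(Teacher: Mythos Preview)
The paper does not prove this theorem; it is quoted verbatim from Cunha--Gusevskii \cite{cugu2} and used as a black box (in the proof of \lemref{csprp1}). There is therefore no in-paper argument to compare your proposal against.

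Your outline is nonetheless the standard Gram-matrix approach and is close in spirit to the original proof in \cite{cugu2}. Two remarks. First, the sign ambiguity you flag comes from reading the angular invariant via the quaternionic formula \eqref{ainv}; in the complex setting the Cartan invariant is the \emph{signed} quantity $\arg(-\langle\z_1,\z_2,\z_3\rangle)\in[-\pi/2,\pi/2]$, which pins down $g_{23}$ without the conjugation ambiguity, so that part of your ``crux'' evaporates once the correct complex definition is used. Second, your appeal to Witt extension needs a word for $n=2$ (and for degenerate quadruples in higher $n$): when the four null lifts already span $\C^{n,1}$ one must check that any linear relation among the $\z_i$ is replicated among the $\w_i$, which follows because such relations are recorded by the kernel of the common Gram matrix. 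The genuine work, as you correctly anticipate, is the precise semi-algebraic description of the image and the homeomorphism at its boundary; that is exactly the content of \cite{cugu2}, not of the present paper.
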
 


\section{ Projective Points}\label{pp} 

\subsection{Projective points} We recall the concept of projective points from \cite{gk1}. Let $T$ be an invertible matrix over $\H$. Let $\lambda \in \H\setminus\R$ be a chosen eigenvalue of $T$ in the similarity class $[\lambda]$.  Identify the $[\lambda]$-eigenspace with $\H$.  Consider the $\lambda$-\emph{eigenset}: $S_{\lambda}=\{ x \in V \ | \ Tx =x \lambda \}$. Note that this set is $x Z(\lambda)$ that is a copy of $\C$ in $\H$. Now, identify $\H$ with $\C^2$. Two non-zero quaternions $q_1$ and $q_2$ are equivalent if  $q_2=q_1 c$, $c \in \C \setminus 0$. This equivalence relation projects $\H$ to the one dimensional complex projective space $\C \P^1$, the $[\lambda]$-eigensphere. 
Since $[\lambda]$ is a conjugacy invariant of $T$, so also the $[\lambda]$-eigensphere $\C \P^1$.

Let $v$ be the projection of the $[\lambda]$-eigenspace. Then for each point on $\C \P^1$, there is a choice of the lift $\v$ of $v$ that spans a complex line in $\v\H$. This choice of $\v$ corresponds to the eigenset of the eigenvalue $\lambda$ of $\v$, and the corresponding point on the eigensphere $\C\P^1$ is called a \emph{projective point} of $[\lambda]$. 

\subsection{Projective points and loxodromic elements} Now suppose $A$ is a regular loxodromic element in $\Sp(n,1)$.
If $a_A$ and $r_A$ are the fixed-points of $A$, then we can determine projective point corresponding to $r_A$, if we know the projective point corresponding to $a_A$ on $\C \P^1$. So we require a single projective point corresponding to pair $(a_A, r_A)$ on $\C \P^1$. Here we have used the fact that $Z(\lambda)=Z(\bar \lambda^{-1})$. Similarly, the projective points of $\x_{1,A}, \ldots, \x_{n-1,A}$ correspond to the centralizer  $Z(\mu_1), \ldots,  Z(\mu_{n-1})$ respectively.

\medskip   The following classification of loxodromic elements in $\Sp(n,1)$ follows from \cite[Section 4.1]{gk2}.

\begin{lemma}\label{lox}
Let $A$ and $A'$ be regular loxodromic elements in $\Sp(n,1)$. Then $A=A'$ if and only if they have the same projective fixed points, the same real trace,  and the same projective points.
\end{lemma}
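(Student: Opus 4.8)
The plan is to prove the nontrivial (``if'') direction by reconstructing $A$ from the three pieces of data, checking that each datum kills exactly the ambiguity left by the previous one; the converse is immediate, since the projective fixed points, the real trace, and the projective points are all defined canonically from the element, so equal elements share them. Throughout I would use that a regular loxodromic $A$ is diagonalizable over $\H$, with $n+1$ distinct eigenlines $\ell_0=\a_A\H$, $\ell_n=\r_A\H$ and $\ell_j=\x_{j,A}\H$ $(1\le j\le n-1)$, and that choosing a nonzero vector in each of these distinct eigenlines yields an $\H$-basis of $\F^{n,1}$ (equivalently, $C_A$ is invertible).

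First I would record what each invariant contributes. Equality of the projective fixed points means that $A$ and $A'$ have the same eigenlines $\ell_i$ inside $\F\P^n$. By \lemref{rt} the real trace determines the conjugacy class, hence by \lemref{hycl} the similarity classes of the eigenvalues; fixing the standard representatives $re^{i\theta}$, $r^{-1}e^{i\theta}$, $e^{i\phi_j}$ (arguments in $[0,\pi]$), it follows that $A$ and $A'$ carry the same eigenvalue representatives $\lambda_i$ on the corresponding eigenlines.

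The heart of the argument is that these two data alone do \emph{not} pin down $A$: on a quaternionic eigenline the genuine eigenvalue seen depends on the eigenvector direction, since replacing $\v$ by $\v\mu$ sends $\lambda$ to $\mu^{-1}\lambda\mu$. This is precisely the ambiguity the projective points remove. Because $A$ is loxodromic it has no real eigenvalue, so each centralizer $Z(\lambda_i)$ equals $\C$ and the eigenset $S_{\lambda_i}=\{x\in\ell_i:Ax=x\lambda_i\}=\v_i\C$ is a genuine complex line inside $\ell_i$, i.e. a point of the eigensphere $\C\P^1$ attached to $\ell_i$ (as in \secref{pp}). Equality of the projective points then says that for every $i$ the eigensets of $A$ and of $A'$ are the \emph{same} complex line in the (common) eigenline $\ell_i$. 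I would pick a nonzero $\w_i$ in this common line; by the very definition of the eigenset, $A\w_i=\w_i\lambda_i=A'\w_i$. Since the $\w_0,\dots,\w_n$ lie in the $n+1$ distinct eigenlines of a regular element they form an $\H$-basis, and two right-linear maps agreeing on a basis are equal, so $A=A'$.

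The main obstacle I anticipate is the bookkeeping: matching each eigenline to its eigenvalue representative, and handling the fact that the null pair carries only one projective point. The pairing of $\ell_i$ with $\lambda_i$ is in fact forced, because a projective point is a complex line physically contained in its eigenline and distinct eigenlines meet only at $0$, so ``same projective points'' cannot survive a mismatched pairing. For the null fixed points I would use $Z(re^{i\theta})=Z(r^{-1}e^{i\theta})$ together with the normalization $\langle\a_A,\r_A\rangle=1$ to see that the eigenset of $r_A$ is determined by that of $a_A$, so a single $\C\P^1$-datum indeed fixes both $\w_0$ and $\w_n$, as asserted in \secref{pp}. Once this matching is in place, the reconstruction above applies verbatim.
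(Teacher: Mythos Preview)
Your argument is correct. The paper does not actually prove this lemma in-text; it simply records that the statement ``follows from \cite[Section 4.1]{gk2}''. Your self-contained proof is the natural direct argument: fix the common eigenlines via the projective fixed points, fix the common eigenvalue representatives via the real trace together with \lemref{rt} and \lemref{hycl}, and then use equality of projective points to pick a common nonzero vector $\w_i$ in each shared $\lambda_i$-eigenset, so that $A\w_i=\w_i\lambda_i=A'\w_i$ on an $\H$-basis. Your handling of the two potential subtleties --- that the assignment $\ell_i\leftrightarrow[\lambda_i]$ is forced because a projective point physically lives on the eigensphere of its own eigenline, and that a single projective point suffices for the null pair since $Z(re^{i\theta})=Z(r^{-1}e^{i\theta})$ and the normalization $\langle\a_A,\r_A\rangle=1$ pins down the second eigenset from the first --- matches exactly the discussion in \secref{pp}. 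Nothing is missing.
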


The above lemma may be interpreted as follows. 
Suppose $\mathcal C$ be the $\Sp(n,1)$ conjugacy classes of regular loxodromic elements. It follows from \lemref{hycl} that the real traces classify a point on $\mathcal C$, and up to conjugacy we can assume that elements of $\mathcal C$ have the same projective fixed points. Let $\mathcal T$ be the set of real traces $(a_1, \ldots, a_{n}) \in \R^{n}$ given by  $\Delta^{-1}(0, \infty)$, where $\Delta: \mathcal C \to (0, \infty)$ is the discriminant function in \lemref{rt}. There is a natural projection map $p: \mathcal C \to \mathcal T$. However, $p^{-1}(t)$ is not unique. The map $p$ has  fiber  $(\C \P^1)^n=\C \P^1 \times \cdots \C \P^1$. A point on this $(\C\P^1)^n$ determines a loxodromic element uniquely up to relabelling of fixed points.

\medskip In the case of $\SU(n,1)$ an easier version of the above lemma holds true. 
\begin{lemma} \label{lox1}
Let $A$ and $A'$ be regular loxodromic elements in $\SU(n,1)$. Then $A=A'$ if and only if they have the same projective fixed points and the same characteristic polynomial, where having the same characteristic polynomial is equivalent to the condition of having the same eigenvalues. 
\end{lemma}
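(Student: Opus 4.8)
The forward implication is immediate: if $A=A'$ then they have identical eigenvalues (hence the same characteristic polynomial) and identical eigenlines (hence the same projective fixed points). So the plan is to prove the reverse implication, and the whole point is that over $\C$ the eigenvalue data is genuinely numerical rather than a similarity class, so that no projective point information is needed.

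First I would record the structural consequences of regularity in the complex setting. Since $\F=\C$ is commutative, the eigenvalues of a loxodromic $A\in\SU(n,1)$ are honest complex numbers rather than similarity classes, and the characteristic polynomial encodes them with multiplicity. Regularity forces the $n+1$ eigenvalues $re^{i\theta},\,r^{-1}e^{i\theta},\,e^{i\phi_1},\ldots,e^{i\phi_{n-1}}$ to be pairwise distinct, so $A$ is diagonalizable; choosing one eigenvector from each eigenline yields $n+1$ eigenvectors $\a_A,\r_A,\x_{1,A},\ldots,\x_{n-1,A}$ which, belonging to distinct eigenvalues, are linearly independent and hence form a basis of $\C^{n+1}$. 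This is exactly the diagonalization $A=C_A E_A C_A^{-1}$ recorded in \secref{prel}. Now I would match the eigendata: by the hypothesis of equal characteristic polynomials, $A$ and $A'$ share the distinct eigenvalues $\lambda_0,\ldots,\lambda_n$; by the hypothesis of equal projective fixed points, for each $\lambda_i$ the $\lambda_i$-eigenline of $A$ equals that of $A'$. Fixing a spanning vector $v_i$ of this common line gives $Av_i=\lambda_i v_i=A'v_i$ for every $i$, and since $\{v_0,\ldots,v_n\}$ is a basis, two linear maps agreeing on a basis coincide, whence $A=A'$.

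The only step requiring care — and the sole place where the statement would fail without regularity — is ensuring the correct pairing of each eigenvalue with its eigenline. I would make this explicit: the two null fixed points are distinguished intrinsically as attracting ($r<1$) and repelling, and each positive fixed point $x_{j}$ is tied to the unimodular eigenvalue $e^{i\phi_j}$, so the correspondence $\lambda_i\leftrightarrow v_i$ is the same for $A$ and $A'$; regularity guarantees each eigenline is one-dimensional and carries a single eigenvalue, removing any ambiguity. I would close by contrasting this with the quaternionic \lemref{lox}: there the eigenvalues are defined only up to similarity and each eigenspace is a quaternionic line, so the extra projective point on $\C\P^1$ must be supplied; over $\C$ the eigenvalue is a genuine scalar acting on a complex eigenline, which is precisely why the characteristic polynomial together with the projective fixed points already determines $A$, giving the advertised ``easier version.''
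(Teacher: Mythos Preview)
Your argument is correct and is essentially the intended one. In fact the paper does not supply a proof of this lemma at all: it is simply stated as the ``easier version'' of the quaternionic \lemref{lox}, and your write-up fills in exactly the routine linear-algebra details the authors omit---regularity forces $n+1$ distinct eigenvalues, hence a genuine eigenbasis, and two linear maps agreeing on a basis coincide.

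One point worth sharpening in your discussion of the eigenvalue--eigenline pairing: for the two null eigenlines the attracting/repelling distinction is indeed intrinsic, but for the space-like eigenlines there is no intrinsic way to recover which unimodular eigenvalue acts on which line from the \emph{unordered} data alone. The lemma must therefore be read (and is used in the paper, e.g.\ in the proof of \thmref{cor2}) with ``same projective fixed points'' meaning the labelled equality $a_A=a_{A'}$, $r_A=r_{A'}$, $x_{j,A}=x_{j,A'}$; under that reading your pairing step is immediate. Your closing remark contrasting with \lemref{lox} correctly identifies why the projective points become superfluous over $\C$.
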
 

\section{Weakly Non-singular Pairs}\label{ns}
In this section, we mostly work with the group $\Sp(n,1)$. However, the arguments restrict over $\SU(n,1)$ with slight modifications, and hence omitted. 
\subsection{Gram matrix associated to a pair} \label{nor}
Let $(A, B)$ be a weakly non-singular pair in $\Sp(n,1)$. Condition (3) in \defref{nsl}  implies that we may assume,   by re-arranging the indices if necessary, that 
$$\langle \x_{k,A},\a_{B}\rangle\neq 0, ~ \langle \x_{k,B},\a_A\rangle \neq 0,   ~ \langle \r_A,\x_{k,B}\rangle \neq 0, ~\langle \r_B,\x_{k,A}\rangle \neq 0, \text{ for }1\leq k \leq n-2.$$

We normalize  the eigenvectors such that for $1\leq k \leq n-2$, 
\begin{equation} \label{nor} 
\langle \a_A,\r_A \rangle=\langle \a_A,\a_B\rangle=\langle \a_A,\r_B \rangle=\langle \a_A,\x_{k,B}\rangle =\langle \a_B,\x_{k,A}\rangle=1,  | \langle \a_B, \r_A \rangle|=1,\end{equation} 
and $\langle \r_A,\x_{k,B}\rangle \neq 0 \neq \langle \r_B, \x_{k,A}\rangle$. 

\medskip For simplicity of notations, we write 
\begin{itemize}
	\item [$\bullet$]$p_1=a_A, ~p_2=r_A, ~p_3=a_B, ~ p_4=r_B,$
	\item [$\bullet$]$ \hbox{for, }5 \leq j\leq  n+2,~  p_j=x_{j-4, A},$
	\item [$\bullet$]$\hbox{for, }n+3 \leq j \leq 2n,~  p_j=x_{j-(n+2), B}.$
	\item [$\bullet$]$p_{2n+1}=x_{n-1, A}$, ~ $p_{2n+2}=x_{n-1, B}$.
\end{itemize}

\medskip Since the eigenvectors of $A \in \Sp(n,1)$ form  an  orthonormal basis for $\H^{n,1}$, it follows that if $C(p_i)=p_i'$ for $1 \leq i \leq 2n$, then $C(p_j)=p_j'$ for $j=2n+1, 2n+2$. For this reason, we shall 
 associate to $(A, B)$ the Gram matrix $(g_{ij}),~g_{ij}=\langle \p_i, \p_j \rangle$,  of the ordered $2n$-tuple $p=(p_1, p_2, \ldots, p_{2n})$. 
In view of the normalized eigenvectors, the Gram matrix has the form $G(p)=(g_{ij})$, where
\begin{enumerate}
\item  $g_{11}=g_{22}=g_{33}=g_{44}=0$; $~~g_{12}=g_{13}=g_{14}=1= \lvert g_{23} \rvert$. 
\medskip \item  For $5 \leq j \leq n+2$, $g_{1j}=0$, $g_{2j}=0$; and,  for $n+3 \leq k \leq 2n$, $g_{1k}=1$, $g_{2k} \neq 0$. 

\medskip \item For $5 \leq j \leq n+2$, $g_{3j}=1$, $g_{4j}\neq 0$; and,  for $n+3 \leq k \leq 2n$, $g_{3k}=0$, $g_{4k} = 0$. 
\medskip \item For $5 \leq j, k \leq n+2$, $j<k$, $g_{jk}=0$; and, for $n+3 \leq k,j \leq 2n$, $g_{jk}=0$,$j<k$, $g_{jk} = 0$.
\end{enumerate}
We call $G$ a \emph{normalized Gram matrix} associated to $(A, B)$. 

\begin{lemma}\label{4.1o}  Suppose that the Gram matrix $G(\p)$ is a normalized Gram matrix for $p$ with respect to the lift $\p=(\p_1,\p_2,\ldots,\p_{2n})$. Let $G({\p}')$  be the  normalized Gram matrix with respect to the lift $\p'=(\p_1{\lambda_1},\ldots,\p_{2n}{\lambda_{2n}})$ of $p$. Then $\lambda_1=\lambda_2=\ldots=\lambda_{2n}$ and ${\lambda_1} \in \Sp(1)$.\end{lemma}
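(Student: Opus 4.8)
The plan is to track how the entries of a normalized Gram matrix change when the chosen lift of each $p_i$ is rescaled on the right by $\lambda_i$, and then to read off constraints on the $\lambda_i$ directly from the normalization relations \eqref{nor}.

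First I would record the transformation law. With $\p_i' = \p_i\lambda_i$ and the convention $\langle \z, \w\rangle = \w^{\ast} H \z$ (linear in the first slot, conjugate-linear in the second), one computes
\begin{equation*}
g_{ij}' = \langle \p_i\lambda_i, \p_j\lambda_j\rangle = \bar\lambda_j\, g_{ij}\, \lambda_i .
\end{equation*}
Since $\H$ is non-commutative it is essential to keep track of which index carries the conjugate. Both $G(\p)$ and $G(\p')$ are assumed normalized, so every entry prescribed by \eqref{nor} must take its prescribed value for both lifts.

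Then I would treat the four null eigenvectors $p_1, \dots, p_4$ first. The conditions $g_{12}=g_{13}=g_{14}=1$ give $\bar\lambda_2\lambda_1=\bar\lambda_3\lambda_1=\bar\lambda_4\lambda_1=1$, while $|g_{23}|=1$ gives $|\lambda_2||\lambda_3|=1$. Passing to moduli in the first three relations and combining with the last forces $|\lambda_1|=|\lambda_2|=|\lambda_3|=|\lambda_4|=1$. Only once the moduli are known can I upgrade each $\bar\lambda_j\lambda_1=1$ to the genuine equality $\lambda_j=\lambda_1$, using $\lambda_1^{-1}=\bar\lambda_1$; this gives $\lambda_1=\lambda_2=\lambda_3=\lambda_4\in\Sp(1)$. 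Next the diagonal normalizations $\langle\x_{k,A},\x_{k,A}\rangle=\langle\x_{k,B},\x_{k,B}\rangle=1$ yield $g_{jj}'=|\lambda_j|^2=1$, so every remaining $\lambda_j$ also lies in $\Sp(1)$. Finally the mixed normalizations $\langle\a_B,\x_{k,A}\rangle=1$ (that is $g_{3j}=1$ for $5\le j\le n+2$) and $\langle\a_A,\x_{k,B}\rangle=1$ (that is $g_{1j}=1$ for $n+3\le j\le 2n$) become $\bar\lambda_j\lambda_3=1$ and $\bar\lambda_j\lambda_1=1$; since $\lambda_1=\lambda_3$ and all moduli are $1$, each forces $\lambda_j=\lambda_1$. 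Collecting the equalities gives $\lambda_1=\cdots=\lambda_{2n}\in\Sp(1)$, as claimed.

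I expect the only genuine subtlety to be the ordering forced by non-commutativity. The off-diagonal normalizations produce scalar identities of the shape $\bar\lambda_j\lambda_i=1$, which on their own do \emph{not} imply $\lambda_i=\lambda_j$; one must first extract modulus information from the diagonal entries and from $|g_{23}|=1$, and only afterwards convert the relations into equalities. Keeping straight which of the two indices of $g_{ij}$ acquires the conjugated scalar is the one place where an order-type slip could occur, so I would state the transformation law carefully and apply it uniformly.
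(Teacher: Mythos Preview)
Your proof is correct and follows essentially the same route as the paper's: derive $\bar\lambda_k\lambda_1=1$ from $g_{12}=g_{13}=g_{14}=1$, use $|g_{23}|=1$ to pin down $|\lambda_1|=1$, upgrade to equalities $\lambda_1=\lambda_2=\lambda_3=\lambda_4$, and then propagate to the remaining indices via $g_{3j}=1$ and $g_{1k}=1$. Your additional use of the diagonal conditions $g_{jj}=1$ for $j\geq 5$ is redundant (the modulus of $\lambda_j$ already falls out of $\bar\lambda_j\lambda_3=1$ once $|\lambda_3|=1$), but harmless, and your explicit statement of the transformation law $g_{ij}'=\bar\lambda_j g_{ij}\lambda_i$ is a nice clarification the paper leaves implicit.
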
 
\begin{proof}
We have ${\langle \p_1\lambda_1,\p_k{\lambda_k}\rangle}=1$, thus $\overline{\lambda_k}{\lambda_1} = 1$, for $k= 2,3,
4$ because ${\langle \p_1,\p_k\rangle}=1$. Now from $\lvert{{\langle \p_2{\lambda_2},\p_3{\lambda_3}\rangle}}\rvert = 1$, we have
$\lvert {\overline{\lambda_3}}\rvert{ \lvert \lambda_2 \rvert }=1 $ as $\lvert \langle \p_2,\p_3\rangle \rvert =1$. Thus we have $\vert\lambda_1 \rvert = 1 $ so $\lambda_1 \in \Sp(1)$. Therefore by $\overline{\lambda_k}{\lambda_1} = 1$ for $k= 2,3,4$ we have $\lambda_1=\lambda_2=\lambda _3=\lambda_4$ and $ {\lambda_1}$ $ \in \Sp(1)$. 

By ${\langle \p_3{\lambda_3},\p_j{\lambda_j}\rangle}= 1$, for $j=5,6,\ldots,{n+2}$ we have $\overline{\lambda_j}{\lambda_3} = 1$. Thus $\lambda_3= \lambda_j$, for $j=5,6,\ldots,{n+2}$ satisfies from $\lvert{{\lambda_3\lvert}}=1$.
Also from the relations ${\langle \p_1{\lambda_1},\p_k{\lambda_k}\rangle}= 1$, for $k={n+3},n+4,\ldots,2n$ we can see that $\overline{\lambda_k}{\lambda_1} = 1$, for $k={n+3},n+4,\ldots,2n$. Now $\lvert{{\lambda_1\lvert}}=1$ gives $\lambda_1= \lambda_k$ for $k={n+3},n+4,\ldots,2n$.  So we have $\lambda_1=\lambda_2=\ldots=\lambda_{2n}$ and ${\lambda_1} \in \Sp(1)$.
\end{proof}
The Gram matrix $G(\p)$ is well-defined up to a scalar action of $\Sp(1)$. We denote the $\Sp(1)$ orbit of entries of $G(\p)$ as $O_{G(p)}$. The following theorem follows using similar arguments as in the proof of \cite[Lemma 8.9]{gk2}. 
\begin{lemma}\label{leg}
Let $(A, B)$ and $(A', B')$ be two weakly non-singular pairs of loxodromic elements in $\Sp(n,1)$.  Let $p=(p_1, \ldots, p_{2n})$ and $p'=(p_1', \ldots, p_{2n}')$ be the associated tuples to the pairs respectively.  Then there exists $C \in \Sp(n,1)$ such that $C(p_i)=p_i'$, $i=1,\ldots, 2n$,  if and only if $O_{{G(p)}}=O_{{G(p')}}$. 
\end{lemma}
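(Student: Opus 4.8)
The plan is to establish the two implications separately: for necessity I would use \lemref{4.1o} to pin down the lift ambiguity, and for sufficiency I would reconstruct the isometry directly from the equality of Gram matrices by a Witt-type extension argument, as in \cite[Lemma 8.9]{gk2}.

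First I would dispose of necessity ($\Rightarrow$). Suppose $C \in \Sp(n,1)$ satisfies $C(p_i) = p_i'$ for $1 \le i \le 2n$. Then $C\p = (C\p_1, \ldots, C\p_{2n})$ is a lift of the tuple $p'$, and since $C$ preserves $\langle \cdot, \cdot\rangle$ we have $\langle C\p_i, C\p_j\rangle = \langle \p_i, \p_j\rangle = g_{ij}$. Thus $C\p$ is a lift of $p'$ whose Gram matrix is the normalized matrix $G(\p)$; in particular it satisfies all the normalization conditions, so it is a \emph{normalized} lift of $p'$. Applying \lemref{4.1o} to the two normalized lifts $C\p$ and $\p'$ of $p'$ produces a single $\lambda \in \Sp(1)$ with $\p_i' = (C\p_i)\lambda$ for every $i$. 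Hence $g_{ij}' = \langle \p_i', \p_j'\rangle = \bar\lambda\langle C\p_i, C\p_j\rangle\lambda = \bar\lambda\, g_{ij}\,\lambda$, which is exactly the assertion $O_{G(p)} = O_{G(p')}$.

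For sufficiency ($\Leftarrow$), assume $O_{G(p)} = O_{G(p')}$, so that $g_{ij}' = \bar\lambda\, g_{ij}\,\lambda$ for a fixed $\lambda \in \Sp(1)$. I would first absorb this scalar: replacing the normalized lift $\p'$ by $\p'' = (\p_1'\bar\lambda, \ldots, \p_{2n}'\bar\lambda)$ gives another lift of $p'$ with $\langle \p_i'', \p_j''\rangle = \lambda\, g_{ij}'\,\bar\lambda = g_{ij}$, since $\lambda\bar\lambda = 1$; thus $G(\p'') = G(\p)$ on the nose. Now the two $2n$-tuples $\p$ and $\p''$ carry identical Gram matrices, so the assignment $\p_i \mapsto \p_i''$ defines a well-defined isometry between the spans $W = \mathrm{span}_\H(\p_1, \ldots, \p_{2n})$ and $W'' = \mathrm{span}_\H(\p_1'', \ldots, \p_{2n}'')$ equipped with their restricted Hermitian forms. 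By Witt's extension theorem for nondegenerate Hermitian forms over $\H$, this extends to an element $C \in \U(n,1;\H) = \Sp(n,1)$, and by construction $C\p_i = \p_i''$, whence $C(p_i) = p_i'$ for $1 \le i \le 2n$, as required.

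I expect the crux of the argument to be the passage from ``equal Gram matrices of the $2n$-tuple'' to ``a global $\Sp(n,1)$-isometry,'' and two points deserve care. First, it is important to work with the span $W$ of the $2n$-tuple rather than to try to build $C$ from the full $A$-eigenframe: the latter fails, because the inner product $\langle \a_B, \x_{n-1,A}\rangle$ is \emph{not} recorded in $G(\p)$, and the null relation $\langle \a_B, \a_B\rangle = 0$ determines only its modulus, leaving a genuine direction of freedom along $\x_{n-1,A}$ that would spoil the matching of $a_B$. Second, since $\mathrm{span}_\H(\a_A, \r_A, \x_{1,A}, \ldots, \x_{n-2,A})$ already has dimension $n$, one has $\dim_\H W \in \{n, n+1\}$: in the full-dimensional case $C$ is unique, while in the degenerate case $\dim_\H W = n$ (which forces $x_{n-1,A} = x_{n-1,B}$) the complement $W^\perp \cong (W'')^\perp$ is a rank-one positive-definite form, so Witt cancellation still supplies the extension. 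In either case the remark that the eigenvectors of $A$ (resp.\ $B$) form an orthonormal basis shows that $C$ automatically carries $x_{n-1,A}, x_{n-1,B}$ to $x_{n-1,A'}, x_{n-1,B'}$, so the isometry is consistent with the full eigenframe data even though those leftover points do not enter the Gram matrix.
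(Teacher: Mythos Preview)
Your proof is correct and follows essentially the same approach as the paper, which simply defers to \cite[Lemma 8.9]{gk2}; you have spelled out the details the paper omits, invoking \lemref{4.1o} for the forward direction and a Witt--extension argument for the converse. Your additional remarks on the well-definedness of the map on spans and the dichotomy $\dim_{\H} W \in \{n, n+1\}$ are accurate and clarify points the paper leaves implicit.
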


\begin{remark} We note further that, if we keep the lift of a chosen point $p_j$ from the same hyperplane, e.g. if we always take $\p_j$ to be standard, then it follows from \lemref{4.1o} that there is a unique normalized Gram matrix associated to the tuple $p$. \end{remark}

\subsection{Conjugacy invariants} \label{ci} We consider the following invariants associated to the tuple $p$.  

\begin{enumerate}
\item Angular invariant: $\A(p_1, p_2, p_3)$. 

\medskip 
\item  Usual Cross-ratios: $ \X_1(A, B)=\X(p_1, p_2, p_3, p_4), ~ \X_2(A, B)=\X(p_1, p_3, p_2, p_4)$. 

\medskip \item {Generalized Cross-ratios}:\\

  For $n+3 \leq k \leq 2n$, $\X_{2k}(A, B)=\X(p_1, p_2, p_3, p_j)$. \\

For $5 \leq j \leq n+2$, $\X_{4j}(A, B)=\X(p_3, p_4, p_1, p_j)$.\\

For $5 \leq j \leq n+2$,  $n+3\leq k \leq 2n$, $\X_{jk}(A, B)=\X(p_3,p_k,p_2,p_j)$.

\medskip Note that we have denoted $\X_{2k}(A, B)$ by $\alpha_k(A, B)$ and $\X_{4j}(A, B)$ by $\beta_k(A, B)$ in \secref{intro}. 

\medskip \item {Goldman's eta-invariants}: 
\\For $5 \leq j \leq n+2$, $\eta_{j}(A, B)=\eta(p_3, p_4; p_j)= {\langle {\bf p}_3, {\bf p}_j \rangle \langle {\bf p}_3, \bf p_4 \rangle}^{-1} { \langle {\bf p}_j, {\bf p}_4\rangle \langle   {\bf p}_j, {\bf p}_j \rangle^{-1}}$.\\
 For $n+3 \leq k \leq 2n$, $\eta_k(A, B)=\eta(p_1, p_2; p_k) = {\langle {\bf p}_1, {\bf p}_k \rangle \langle {\bf p}_1, \bf p_2 \rangle}^{-1} { \langle {\bf p}_k, {\bf p}_2\rangle \langle   {\bf p}_k, {\bf p}_k \rangle^{-1}}$.
\end{enumerate}

\medskip We note that using our notation earlier, $\X_{2j}(A, B)=\alpha_j(A, B)$, and $\X_{4k}(A, B)=\beta_k(A, B)$. However, we slightly change the notation here in order to have uniformity in the symbols. 

\medskip 
\begin{lemma} \label{leg2}
Let $(A, B)$ be a weakly non-singular  pair in $\Sp(n,1)$. Suppose that the Gram matrix $G(\p)=(g_{ij})$ is a normalized Gram matrix associated to $(A, B)$ with respect to the lift $\p=(\p_1,\p_2,\ldots,\p_{2n})$. Then  the Gram matrix is determined by the invariants listed above.   
\end{lemma}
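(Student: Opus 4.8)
The plan is to reconstruct the Hermitian matrix $G(\p)=(g_{ij})$ entry by entry, recovering each unknown entry from exactly one of the invariants of \secref{ci}. Since $G(\p)$ is Hermitian with $g_{ji}=\overline{g_{ij}}$ and real diagonal, it suffices to recover the diagonal and the upper triangle. The normalization already fixes $g_{11}=g_{22}=g_{33}=g_{44}=0$, the entries $g_{12}=g_{13}=g_{14}=1$, and all the vanishing entries prescribed by the normalization conditions (1)--(4); so the only quantities left undetermined are $g_{23}$ (of modulus one), $g_{24}$, $g_{34}$, the entries $g_{4j}$ for $5\le j\le n+2$, the entries $g_{2k}$ for $n+3\le k\le 2n$, the cross terms $g_{jk}$ with $5\le j\le n+2$ and $n+3\le k\le 2n$, and the real diagonal norms $g_{jj}$ for $5\le j\le 2n$. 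A short bookkeeping check shows these unknowns are in bijection with the listed invariants, so the proof amounts to inverting that correspondence in a suitable order.

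The first and only non-mechanical step is the entry $g_{23}$. Using the triple-product in \eqref{ainv} together with $g_{12}=g_{31}=1$, one computes $\langle \p_1,\p_2,\p_3\rangle=g_{12}\,g_{23}\,g_{31}=g_{23}$, so that the angular invariant $\A(p_1,p_2,p_3)$ determines $\Re(g_{23})=-\cos\A(p_1,p_2,p_3)$. As a full quaternion of modulus one this does not rigidify $g_{23}$, and here I would invoke \lemref{4.1o}: the normalized Gram matrix is defined only up to the global action $g_{ij}\mapsto\bar\mu g_{ij}\mu$, $\mu\in\Sp(1)$, and the conjugacy orbit of a unit quaternion is exactly the sphere of unit quaternions with a prescribed real part. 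Hence, after using this gauge to normalize $g_{23}$ to the complex unit $e^{i\theta}$ with $\cos\theta=-\cos\A(p_1,p_2,p_3)$, the entry $g_{23}$ is fixed, and this is precisely what is needed for the $\Sp(1)$-orbit $O_{G(p)}$ to be pinned down.

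With $g_{23}$ in hand, the remaining entries fall out as one-variable equations over $\H$, solved in the dependency order forced by which known entries each relation uses. Writing the cross-ratio \eqref{ecr} as $\X(p_a,p_b,p_c,p_d)=g_{ca}\,g_{cb}^{-1}\,g_{db}\,g_{da}^{-1}$ and inserting the normalized values, I would get $\X_1=\overline{g_{23}}^{\,-1}\overline{g_{24}}$ and $\X_2=g_{23}^{-1}\overline{g_{34}}$, hence solve for $g_{24}$ and $g_{34}$; the generalized cross ratios give $\X_{4j}=\overline{g_{4j}}$ (so $g_{4j}$ directly), $\X_{2k}=\overline{g_{23}}^{\,-1}\overline{g_{2k}}$ (so $g_{2k}$), and $\X_{jk}=g_{23}\,g_{2k}^{-1}\,g_{jk}$ (so $g_{jk}=g_{2k}\,g_{23}^{-1}\X_{jk}$, using the previously found $g_{2k}$). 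Finally the Goldman eta-invariants $\eta_j=g_{34}^{-1}\overline{g_{4j}}\,g_{jj}^{-1}$ and $\eta_k=\overline{g_{2k}}\,g_{kk}^{-1}$ recover the real diagonal norms $g_{jj}$ and $g_{kk}$: one reads them off from the moduli, the phases being automatically consistent because the data arise from a genuine pair.

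Assembling the recovered entries with the normalized ones and their Hermitian conjugates produces the full matrix, which proves the claim. I expect the main obstacle to be exactly the recovery of $g_{23}$: everything after it is a triangular system of scalar equations over the quaternions, whereas $g_{23}$ forces the careful reconciliation of the real-valued angular invariant with a modulus-one quaternionic entry, which is only possible once the $\Sp(1)$-gauge of \lemref{4.1o} is taken into account. The remaining care is purely notational—tracking conjugations and left/right multiplication orders in the noncommutative setting—and checking that each eta-invariant relation indeed returns a positive real diagonal entry.
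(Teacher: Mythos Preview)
Your proof is correct and follows essentially the same route as the paper: compute each invariant in terms of the normalized Gram entries and invert. Your treatment of $g_{23}$ is in fact slightly more careful than the paper's---where the paper simply asserts $g_{23}=-e^{i\A}$, you make explicit that the angular invariant only fixes $\Re(g_{23})$ and that the $\Sp(1)$-gauge of \lemref{4.1o} is what pins down the remaining phase; after that step your formulas for $\X_1,\X_2,\X_{2k},\X_{4j},\X_{jk},\eta_j,\eta_k$ coincide verbatim with the paper's.
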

\begin{proof}The proof is obtained by computing the invariants in view of the normalized Gram matrix and we have
$$\A=\arg(-g_{23}), \hbox{i.e. } g_{23}=-e^{i \A}$$
$$\X_1={\overline g_{23}}^{-1} {\overline g_{24}}, ~\X_2=g_{23}^{-1} {\overline g_{34}};$$
$$\X_{2k}={\overline g_{23}}^{-1} {\overline g_{2k}},~ \X_{4j}=\overline{g}_{4j};$$
$$\X_{jk}=g_{23} g_{2k}^{-1} g_{jk}, ~5 \leq j \leq n+2, ~ n+3\leq k \leq 2n;$$
$$\eta_j=g_{34}^{-1} \overline g_{4j} g_{jj}^{-1}, ~ \eta_k = {\overline g_{2k}} g_{kk}^{-1} .$$
This clearly shows the result. 
\end{proof}

\subsection{Classification of weakly non-singular pairs} 
\begin{theorem}\label{thmp2}
Let $(A, B)$ be a weakly non-singular pair of loxodromic elements in $\Sp(n,1)$. 
 Then $(A, B)$ is determined uniquely up to conjugacy in $\Sp(n,1)$ by the real traces, the angular invariant $\A(a_A, r_A, a_B)$, the $\Sp(1)$  conjugation orbit of the (unordered) tuple of the above conjugacy invariants (2)--(4),  and the projective points. \end{theorem}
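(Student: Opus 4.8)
The plan is to reduce the classification of the pair $(A,B)$ up to $\Sp(n,1)$-conjugacy to the classification of the associated $2n$-tuple of projective fixed points together with the spectral data, and then invoke the lemmas already established in this section. First I would record that a weakly non-singular pair $(A,B)$ determines, and is determined by, the following three pieces of data: (a) the $\Sp(n,1)$-congruence class of the ordered tuple $p=(p_1,\dots,p_{2n})$ of projective fixed points, (b) the real traces $\rtr(A)$, $\rtr(B)$ of the two loxodromic elements, and (c) the projective points of $A$ and $B$. The implication ``$(A,B)$ determines this data'' is immediate from the definitions. For the converse, the key structural fact is \lemref{lox}: a regular loxodromic element of $\Sp(n,1)$ is recovered uniquely from its projective fixed points, its real trace, and its projective points. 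Thus if two weakly non-singular pairs share all of this data (after conjugating so that their projective fixed points coincide), \lemref{lox} applied separately to $A$ versus $A'$ and to $B$ versus $B'$ forces $A=A'$ and $B=B'$, so the pairs coincide.

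The second step is to show that the congruence class of the tuple $p$ is exactly captured by the listed numerical invariants. Here I would invoke \lemref{leg}, which says that two weakly non-singular tuples $p$ and $p'$ are related by an element $C\in\Sp(n,1)$ if and only if their normalized Gram matrices have the same $\Sp(1)$-orbit, $O_{G(p)}=O_{G(p')}$. So it remains to argue that $O_{G(p)}$ is encoded by the angular invariant $\A(a_A,r_A,a_B)$ together with the $\Sp(1)$-conjugation orbit of the unordered tuple of invariants (2)--(4). This is precisely the content of \lemref{leg2}, which writes every entry $g_{ij}$ of the normalized Gram matrix as an explicit expression in these invariants; the normalization conditions \eqnref{nor} fix the remaining entries to be $0$ or $1$. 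The only subtlety is the bookkeeping of the $\Sp(1)$-ambiguity: by \lemref{4.1o} a change of lifts multiplies the whole tuple by a single common $\lambda\in\Sp(1)$, which acts on the Gram entries by $g_{ij}\mapsto \overline{\lambda}\,g_{ij}\,\lambda$, i.e. by simultaneous conjugation; hence it is the $\Sp(1)$-conjugation orbit of the tuple of invariants, not the invariants themselves, that is well defined, matching the statement.

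Combining these, the congruence class of $p$ is determined by $\A(a_A,r_A,a_B)$ and the $\Sp(1)$-orbit of the tuple of cross-ratios, generalized cross-ratios, and eta-invariants; after conjugating so that the projective fixed points of the two pairs agree, \lemref{lox} then upgrades the matching of projective fixed points, real traces, and projective points to the equality of the loxodromic elements themselves. The only point that requires care is that the real traces determine the conjugacy class of each of $A$ and $B$ individually with the correct spectral multiplicities: here I would lean on \lemref{rt}, which guarantees that for a regular loxodromic the real trace $\rtr$ pins down the coefficients $a_j$ and hence the similarity classes of eigenvalues, while the projective points then resolve the residual fiber $(\C\P^1)^n$ described after \lemref{lox}.

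The step I expect to be the main obstacle is the careful tracking of the $\Sp(1)$-ambiguity in the quaternionic setting: unlike the complex case, the individual Gram entries and the cross-ratios are \emph{not} conjugacy invariants but only well defined up to the common $\Sp(1)$-conjugation coming from \lemref{4.1o}, so one must verify that the quantities in \lemref{leg2} genuinely reconstruct the orbit $O_{G(p)}$ and not merely the real parts and moduli. Once the relations of \lemref{leg2} are read as identities of $\Sp(1)$-orbits and shown to be invertible (recovering each $g_{ij}$ from the listed invariants), the rest of the argument is a direct assembly of \lemref{leg}, \lemref{leg2}, and \lemref{lox}.
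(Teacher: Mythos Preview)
Your proposal is correct and follows essentially the same route as the paper: reduce to the $\Sp(n,1)$-congruence class of the $2n$-tuple $p$ via \lemref{leg} and \lemref{leg2}, then finish with \lemref{lox} using the real traces and projective points. The paper's argument is just a terser version of what you wrote, explicitly producing the $\mu\in\Sp(1)$ and the diagonal conjugator $D$ to pass from equality of the $\Sp(1)$-orbit of invariants to $O_{G(p)}=O_{G(p')}$, then invoking \lemref{leg} and \lemref{lox} exactly as you do.
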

\begin{proof} Let $(A, B)$ and $(A', B')$ be loxodromic elements in $\Sp(n,1)$. Suppose  $p=(p_1, \ldots, p_{2n})$ and $p'=(p_1', \ldots, p_{2n}')$ are the associated tuples to the pairs respectively. 
Assume that $\A(p_1, p_2, p_3)=\A({p_1}', {p_2}', {p_3}')$, and the $\Sp(1)$  conjugation orbit of the (unordered) tuple of the above conjugacy invariants (2)--(4) with respect to $(A,B)$ and  $(A',B')$ are equal. So there exist $\mu \in \Sp(1)$ such that 
$$\mu \X_1(A, B)\bar {\mu}=\X_1(A', B'),~ \mu \X_2(A, B)\bar {\mu}=\X_2(A', B'), ~ \mu \X_{2k}(A, B)\bar {\mu}=\X_{2k}(A', B'),$$ 
$$\mu \X_{4j}(A, B)\bar {\mu}=\X_{4j}(A', B'),~\mu \X_{jk}(A, B)\bar {\mu}=\X_{jk}(A', B'),~\mu \eta_j(A, B)\bar {\mu}=\eta_j(A', B'),$$
$$\mu \eta_k(A, B)\bar {\mu}=\eta_k(A', B').$$
	
By Lemma \ref{leg2}, we have  $DG(\p)D^{-1}= G(\p')$, where $D=diag(\mu,\mu,\ldots,\mu)$. That is, $O_{{G(\p)}}=O_{{G(\p')}}$. Then by \lemref{leg}, there exists $C \in \Sp(n,1)$ such that $C(p_i)=p_i'$, for $1 \leq i \leq 2n+2$. In particular, $CAC^{-1}$ and $A'$ have the same projective fixed points. Since they have the same real traces, they belong to the same conjugacy class.  By \lemref{lox}, $CAC^{-1}=A'$ if and only if they have the same projective points. Similarly, $CBC^{-1}=B'$. 
\end{proof} 

\begin{remark}
Let $\mathcal I$ denote the tuple of real numbers given by the above invariants, and let $\mathcal T$ denote the set of real traces of regular  loxodromics.  Let $\mathcal W$ denote the set of weakly non-singular representations in $\fX({\mathrm F}_2, \Sp(n,1))$. Clearly by \lemref{leg2} there is a well-defined map 
$p: \mathcal{W} \to \mathcal T \times \mathcal T \times \mathcal I$. However, given a point $t$ in the image $p(\mathcal{W})$, $p^{-1}(t)$ is not a unique point, but a product of $2n$ copies of $\C\P^1$ corresponding to the projective points. 
\end{remark}
\subsubsection{Proof of \thmref{thm2}} This is a restatement of the above theorem where $\rho(x)=A$, $\rho(y)=B$. 

\subsubsection{Proof of \thmref{cor3}} Follows from the above by restricting everything over $\C$. 

\section{The Non-Singular Pairs}\label{mnth}  
\begin{lemma}\label{csprp1}
 Let $A,~B$ be loxodromic elements in $\SU(n,1)$ such that  $(A, B)$ is non-singular. Denote $\A(A, B)=\A(a_A, r_A, a_B)$. Let $(A', B')$ be a non-singular and loxodromic pair such that the following holds: 

\medskip 
(i) For $k=1,2$, $\X_{k}(A,B)=\X_{k}(A',B')$, $\A(A, B)=\A(A',B')$.

(ii) For $1 \leq j \leq n-2$, $\alpha_j(A', B')=\alpha_j(A, B)$ and  $\beta_j(A', B')=\beta_j(A, B)$. 

\medskip   Then
there exists an element $C$ in $\SU(n,1)$ such that $C(a_{A})=a_{A'},~C(r_{A})=r_{A'}$, $C(x_{k, A})=x_{k, A'}$,  and, 
$C(a_{B})=a_{B'},~C(r_{B})=r_{B'}$, $C(x_{k, B})=x_{k, B'}$.   
\end{lemma}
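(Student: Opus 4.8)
\textbf{The plan is to} reduce the statement to the classification of boundary quadruples via \thmref{cg22}, and then bootstrap from the four boundary fixed points to the positive-definite eigenvectors using the $\alpha$- and $\beta$-invariants. First I would observe that the four boundary points $(a_A, r_A, a_B, r_B)$ carry exactly the data $\X_1, \X_2, \A(a_A,r_A,a_B)$. By hypothesis (i) these agree for $(A,B)$ and $(A',B')$. The Cunha--Gusevskii theorem \thmref{cg22} requires a cross ratio, a second cross ratio, and one angular invariant to pin down a quadruple up to $\SU(n,1)$-congruence; so the task at the outset is to check that the specific triple $(\X(z_1,z_2,z_3,z_4), \X(z_1,z_4,z_2,z_3), \A(z_1,z_2,z_3))$ appearing in their statement is recoverable from $\X_1, \X_2$ and $\A(a_A,r_A,a_B)$ under the permutation symmetries of the cross ratio. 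This is the bookkeeping step noted in the introduction (``modulo the symmetric group action \ldots only three such cross ratios are needed''). Granting that, \thmref{cg22} produces $C_0 \in \SU(n,1)$ with $C_0(a_A)=a_{A'}$, $C_0(r_A)=r_{A'}$, $C_0(a_B)=a_{B'}$, $C_0(r_B)=r_{B'}$.

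\textbf{Next I would} replace $(A,B)$ by $C_0(A,B)C_0^{-1}$ so that the two pairs share all four null fixed points, and then lift everything to $\F^{n,1}$ using the normalization \eqnref{nor}: take the standard lifts $\a_A,\r_A,\a_B,\r_B$ with $\langle\a_A,\r_A\rangle=\langle\a_A,\a_B\rangle=\langle\a_A,\r_B\rangle=1$, and similarly for the primed tuple. Because $C_0$ already matches the boundary points, after rescaling the lifts I may assume the four boundary lifts coincide exactly (the residual freedom being a scalar, which I can absorb). At this stage the only remaining discrepancy between $(A,B)$ and $(A',B')$ is in the positive-definite eigenvectors $x_{k,A}, x_{k,B}$, and I want to show these can be matched by a further isometry $C_1$ fixing $a_A,r_A,a_B,r_B$. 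The key is that, by non-singularity, $\langle \x_{k,B}, \a_A\rangle \neq 0$ and $\langle\x_{k,A},\a_B\rangle\neq 0$, so after normalizing $\langle\a_A,\x_{k,B}\rangle = \langle\a_B,\x_{k,A}\rangle = 1$ the invariant $\beta_k = \X(a_B,r_B,a_A,x_{k,A})$ reads off $\langle\r_B,\x_{k,A}\rangle$ (equivalently the relevant Gram entry), and $\alpha_k = \X(a_A,r_A,a_B,x_{k,B})$ reads off $\langle\r_A,\x_{k,B}\rangle$. Thus hypothesis (ii), together with the already-matched null data, forces the corresponding Gram entries involving the $x_{k,A}, x_{k,B}$ to agree between the two pairs.

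\textbf{The main obstacle} I anticipate is precisely this last step: showing that equality of the relevant Gram entries suffices to produce a single isometry $C_1 \in \SU(n,1)$ simultaneously sending each $x_{k,A}$ to $x_{k,A'}$ and each $x_{k,B}$ to $x_{k,B'}$ while fixing all four boundary points. The subtlety is that the $x_{k,A}$ and $x_{k,B}$ together need not form an orthogonal family, and one must verify that the full $2n \times 2n$ (or $2n+2$) Gram matrices of the two ordered tuples coincide, not merely the listed entries. I would handle this by assembling the normalized Gram matrix of the full tuple $p = (a_A, r_A, a_B, r_B, x_{1,A},\ldots, x_{n-2,B}, x_{n-1,A}, x_{n-1,B})$ exactly as in \secref{nor}, invoking \lemref{leg2} to express every entry in terms of the listed invariants (here over $\C$, where cross ratios are genuine conjugacy invariants rather than similarity classes), and then applying \lemref{leg} in its $\SU(n,1)$ form to conclude that equal Gram matrices yield the desired congruence $C$. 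The orthonormal-basis remark in \secref{nor} guarantees that matching $p_i$ for $1 \leq i \leq 2n$ automatically matches the remaining pair $x_{n-1,A}, x_{n-1,B}$, so no separate argument is needed for the last two eigenvectors. Composing $C = C_1 C_0$ gives the required isometry.
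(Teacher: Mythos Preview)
Your first step---applying \thmref{cg22} to produce $C_0$ matching the four boundary fixed points---is correct and is exactly how the paper begins. The gap is in your second step. You propose to invoke \lemref{leg2} to express ``every entry'' of the normalized Gram matrix in terms of ``the listed invariants,'' but \lemref{leg2} does \emph{not} do this with only $\X_1,\X_2,\A,\alpha_k,\beta_k$. Look at its proof: determining the mixed entries $g_{jk}=\langle \x_{l,A},\x_{m,B}\rangle$ (for $5\le j\le n+2$, $n+3\le k\le 2n$) requires the additional generalized cross ratios $\X_{jk}(A,B)=\X(p_3,p_k,p_2,p_j)$, and determining the diagonal entries $g_{jj}$ requires the Goldman eta-invariants $\eta_j,\eta_k$. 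None of these appear among the hypotheses of \lemref{csprp1}. So the Gram-matrix route, as you have sketched it, simply does not close: you cannot conclude that the two Gram matrices agree from the data given.

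The paper bypasses this entirely by exploiting the stronger \emph{non-singular} hypothesis (as opposed to merely weakly non-singular), which you never actually use. After matching the four boundary points by $C$, the paper fixes $k$ and forms the vector $\v=\x_{k,B}-C^{-1}(\x_{k,B'})\lambda$ for a suitable scalar $\lambda$ read off from $\alpha_k$. Equality of $\alpha_k$ and the orthogonality relations $\langle \x_{k,B},\a_B\rangle=\langle \x_{k,B},\r_B\rangle=0$ then force $\v$ to be orthogonal to all four null eigenvectors $\a_A,\r_A,\a_B,\r_B$. If $\v\neq 0$ it is positive, hence polar to a hyperplane containing all four boundary fixed points---contradicting the non-singularity condition that these four points lie on no common proper totally geodesic hyperplane. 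Thus $\v=0$, so $C(x_{k,B})=x_{k,B'}$ directly, with no need to know $\langle \x_{l,A},\x_{m,B}\rangle$ or any norm. The $\beta_j$ argument is symmetric. This is the missing idea: non-singularity is precisely what lets you skip the extra Gram entries.
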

\begin{proof}  We shall follow similar arguments as in the proof of \cite[Lemma 5.1]{gp2}.

\medskip Since $\X_{k}(A,B)=\X_{k}(A',B'), \A(A, B)=\A(A',B') ~k=1,2$, by \thmref{cg22} it follows that there exist $C\in \SU(n,1)$ such that 
$C(a_{A})=a_{A'},~C(r_{A})=r_{A'},~C(a_B)=a_{B'}$ and  $C(r_{B})=r_{B'}$. Let $1 \leq k \leq n-2$. 
Since $\alpha_k(A, B)=\alpha_k(A', B')$, hence 
$${\langle \xb, \ra \rangle \langle \xb, \aa \rangle}^{-1} { \langle \ab, \aa\rangle} {\langle \ab, \ra \rangle^{-1}}={\langle \yxb, \yra \rangle \langle \yxb, \yaa \rangle}^{-1} { \langle \yab, \yaa \rangle} {\langle \yab, \yra \rangle^{-1}}$$
Let $${\langle C^{-1}(\yxb), \ra \rangle}^{-1} \langle \xb, \ra \rangle =  {\langle C^{-1}(\yxb), \aa \rangle^{-1}} \langle \xb, \aa \rangle = \lambda$$

This implies
\begin{equation}\label{e11}  \langle \xb-C^{-1}(\yxb){\lambda}, \ra \rangle=0; \end{equation} 
\begin{equation} \label{e22}  \langle \xb- C^{-1}(\yxb){\lambda}, \aa \rangle=0.\end{equation}
On the other hand, note that
\begin{equation}\label{e33}  \langle \xb-C^{-1}(\yxb){\lambda}, \rb \rangle =\langle \xb, \rb\rangle- \langle C^{-1}(\yxb), \rb \rangle\lambda=0-  \langle \yxb, \yrb \rangle \lambda=0. \end{equation} 

Similarly,
\begin{equation} \label{e44} \langle  \xb- C^{-1} (\yxb){\lambda}, \ab \rangle=0.\end{equation}
Let $L_A$  and $L_B$ denote the two-dimensional time-like subspaces of $\C^{n,1}$ with $\{\aa, \ra\}$ and $\{\ab, \rb\}$ are the respective bases of $L_A$ and $L_B$, that represents the complex lines.  Thus it follows from \eqnref{e11} - \eqnref{e44} that $\v= \xb- C^{-1} (\yxb){\lambda}$ is orthogonal to  both $L_A$ and $L_B$.  We must have $\langle v, v \rangle >0$. Thus $v$ is polar to the $(n-1)$ dimensional totally geodesic complex subspace  that is represented by ${\rm V}=v^{\perp}$. Since $\C^{n,1}={\rm V} \oplus  \C v $, hence $L_A$ and $L_B$ must be subsets in ${\rm V}$. Thus, the fixed points of $A$ and $B$ belong to the boundary of the totally geodesic subspace $\P({\rm V})$. This is a contradiction to the non-singularity of $(A, B)$. Hence we must have $v=0$, that is $C(\xb)= \yxb \lambda$. Thus,  $C(x_{k,B})=x_{k,B'}$. Consequently, $C(x_{n-1, B})=x_{n-1, B'}$. 

Similarly $\beta_j(A, B)=\beta_j(A', B')$ implies $C(x_{j, A})=x_{j, A'}$ for $1 \leq j \leq n-1$.  This proves the lemma. 
\end{proof}

\subsection{Proof of \thmref{cor2}}
If $(A, B)$ and $(A', B')$ are conjugate, then it is clear that they have the same invariants. 

Conversely, suppose $(A, B)$ and $(A', B')$ are non-singular pairs of loxodromics such that $\alpha_k(A, B)=\alpha_k(A', B')$, $\beta_k(A, B)=\beta_k(A', B')$, $1 \leq k \leq n-2$,  $\X_i(A, B)=\X_i(A', B')$, $i=1, 2$, $\A(A, B)=\A(A', B')$. By \lemref{csprp1}, it follows that there exist $C\in \SU(n,1)$ such that 
$C(a_{A})=a_{A'},~C(r_{A})=r_{A'}, ~C(x_{k,A})=x_{k, A'}$ and $C(a_{B})=a_{B'},~C(r_{B})=r_{B'}, ~C(x_{k, B})=x_{k, B'}$, $1 \leq k \leq n-1$. 
Therefore $A'$, resp. $B'$,  and $CAC^{-1}$, resp. $CBC^{-1}$,  have the same fixed points. Since they also have the same family of traces,  $CAC^{-1}=A'$. Similarly, $CBC^{-1}=B'$. 
 This completes the proof.

\subsection{Proof of \thmref{qc2}} The following lemma follows by mimicking the proof of \thmref{csprp1}, the only difference is that instead of \thmref{cg22}, one has to apply \thmref{cao} in the proof.  

\begin{lemma}
Let $A,~B$ be loxodromic elements in $\Sp(n,1)$ such that  $(A, B)$ is non-singular. Suppose the lifts of the attracting fixed points of a loxodromic element are always assumed to be standard. 
  Let $(A', B')$ be a non-singular pair such that the following holds: 

\medskip 
(i) For $k=1,2,3$, $\X_{k}(A,B)=\X_{k}(A',B')$, $\A_k(A, B)=\A_k(A',B')$.

(ii) For $1 \leq j \leq n-2$, $\alpha_j(A', B')=\alpha_j(A, B)$ and  $\beta_j(A', B')=\beta_j(A, B)$. 

\medskip   Then
there exists an element $C$ in $\Sp(n,1)$ such that $C(a_{A})=a_{A'},~C(r_{A})=r_{A'}$, $C(x_{k, A})=x_{k, A'}$,  and, 
$C(a_{B})=a_{B'},~C(r_{B})=r_{B'}$, $C(x_{k, B})=x_{k, B'}$.   
\end{lemma}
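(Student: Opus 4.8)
The plan is to mirror the structure of the proof of \lemref{csprp1} almost verbatim, substituting the quaternionic congruence criterion for the complex one at the single point where a boundary quadruple is aligned. First I would invoke \thmref{cao} in place of \thmref{cg22}: since the hypotheses supply equality of the three cross ratios $\X_k(A,B)=\X_k(A',B')$ and the three angular invariants $\A_k(A,B)=\A_k(A',B')$ for $k=1,2,3$, Cao's theorem produces an isometry $C \in \Sp(n,1)$ carrying the boundary quadruple $(a_A, r_A, a_B, r_B)$ to $(a_{A'}, r_{A'}, a_{B'}, r_{B'})$. Here the standing convention that the attracting fixed points are lifted to their standard lifts is what makes the cross ratios and angular invariants well-defined numbers (rather than mere similarity classes) and pins down $C$ consistently; I would note this explicitly, since it is the substantive difference from the $\SU(n,1)$ argument.

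Next I would run the orthogonality computation for the space-like eigenvectors exactly as in \lemref{csprp1}. Fixing $1 \leq k \leq n-2$ and using $\alpha_k(A,B)=\alpha_k(A',B')$, I would define the scalar $\lambda$ by matching the pairings of $\xb$ and $C^{-1}(\yxb)$ against $\aa$ and $\ra$, then verify that $\v = \xb - C^{-1}(\yxb)\lambda$ is orthogonal to all four of $\aa, \ra, \ab, \rb$, reproducing the analogues of \eqnref{e11}--\eqnref{e44}. The conclusion $\v = 0$ then follows from non-singularity: if $\v \neq 0$ it is positive (being orthogonal to two time-like planes), hence polar to a totally geodesic $\F$-hyperplane whose boundary would contain all four null fixed points, contradicting the last clause of \defref{nsl1}. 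This gives $C(x_{k,B}) = x_{k,B'}$, and symmetrically $\beta_j(A,B)=\beta_j(A',B')$ gives $C(x_{j,A})=x_{j,A'}$; the last space-like eigenvectors $x_{n-1,A}, x_{n-1,B}$ are then forced by orthonormality of the eigenbasis, as recorded in \secref{nor}.

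I expect the only genuine subtlety to be bookkeeping with the quaternionic scalar $\lambda$: over $\H$ one must be careful that the pairing identities defining $\lambda$ are consistent (the two defining ratios really do agree) and that the manipulation in the analogue of \eqnref{e33} respects right-multiplication by $\lambda$, since quaternionic Hermitian forms are $\F$-linear in one slot and conjugate-linear in the other. In the complex case this is automatic; over $\H$ I would check that the chosen normalizations from \secref{nor} make $\lambda$ a well-defined quaternion and that $C^{-1}(\yxb)\lambda$ lands in the correct eigenline. Because the orthogonality argument itself only uses the Hermitian form and the non-degeneracy of the $\F^{n,1}$ decomposition $\mathrm{V} \oplus \F\v$, it transfers to $\H$ without essential change. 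The main obstacle is therefore not conceptual but notational, and I would flag that the proof is obtained by ``mimicking'' \lemref{csprp1} with \thmref{cao} replacing \thmref{cg22}, carrying the quaternionic scalars through the four orthogonality relations.
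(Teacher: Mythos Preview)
Your proposal is correct and matches the paper's own approach exactly: the paper states that the lemma ``follows by mimicking the proof of \lemref{csprp1}, the only difference is that instead of \thmref{cg22}, one has to apply \thmref{cao} in the proof,'' which is precisely what you outline. Your added remarks about the role of the standard-lift convention in pinning down the quaternionic invariants and about the care needed with right multiplication by $\lambda$ are appropriate elaborations of points the paper leaves implicit.
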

Now, \thmref{qc2} follows using same arguments as above or in the proof of \thmref{thm2}. 
\section{The Twist-Bend Parameters and Surface group Representations} \label{twb} 
\subsection{The Twist-Bend Parameters } Suppose that $\langle A, B \rangle$ is a non-singular $(0,3)$ group in ${\rm Sp}(3,1)$, i.e. $A$, $B$ and $B^{-1}A^{-1}$ are loxodromics and $\langle A, B \rangle$ is free. We shall also assume that  $(A, B)$ is non-singular. We want to attach two such non-singular subgroups to get a group that is freely generated by three generators. Now two cases are possible.  The first case corresponds to the case when two different three-holed spheres (or pair of pants) are attached along their boundary components. This gives a $(0, 4)$ group generated by three elements. The second case corresponds to the case when two of the boundary components of the same three-holed sphere is glued. In this case gluing two $(0,3)$ groups gives an $(1,1)$ group that is a group generated by two loxodromic elements and their commutator. This process is called `closing a handle'. To get more details of these terminologies and the gluing process, we refer to \cite{pp}. 

 Let $\langle A, B \rangle$ and $\langle C, D \rangle$ be two non-singular $(0,3)$ groups in ${\rm Sp}(3,1)$ such that the boundary components associated to $A$ and $D$ are compatible. Here compatibility means $A=D^{-1}$. A three dimensional \emph{quaternionic hyperbolic twist bend} corresponds to an element $K$ in ${\rm Sp}(3,1)$ that commutes with $A$ and conjugates $\langle C, D \rangle$, see \cite[Section 8.1]{pp}. We assume that up to conjugacy, $A$ fixes $0$, $\infty$, and it is of the form $E(r, \theta, \phi_1, \phi_2)$. Since $K$ commutes with $A$, it is also of the form $K=E(t, \psi, \xi_1, \xi_2)$, see \cite{kgz}. Thus $K$ is either a boundary elliptic or, a hyperbolic element. 

It follows that there is a total of ten real parameters associated to $K$, the real trace $(t, \psi, \xi_1, \xi_2)$, along with six real parameters associated to the projective points. If $t=1$, then $K$ is a boundary elliptic and the eigenvalue $[e^{i \psi}]$ has multiplicity $2$.  The projective points for these eigenvalues can be defined as before.  There are exactly one negative-type and two positive-type eigenvalues of $K$. Since $K$ commutes with $A$, the projective points of $K$ is determined by the projective points of $A$. Hence,  there are three projective points of $K$ to determine it. Consequently, we shall have 10 real parameters associated to a twist-bend $K$. We denote these parameters by $\kappa=(t, \psi, \xi_1, \xi_2, k_1, k_2, k_3)$, where $k_1=p_1(K)$, $k_2=p_2(K)$, $k_3=p_3(K)$ are the projective points of the  similarity classes of eigenvalues of $K$.

The parameters  $\kappa=(t, \psi, \xi_1, \xi_2, k_1, k_2, k_3)$ obtained this way, is called the \emph{twist-bend parameters}. Note that the twist-bend is a relative invariant as it  always has to be chosen with respect to some fixed group $\langle A, B, C \rangle$ that one has to specify before applying the twist-bend.  When we write $A=Q E(r, \theta, \phi_1, \phi_2) Q^{-1}$, if the matrix $K=QE(t, \psi, \xi_1, \xi_2)Q^{-1}$, then we say that the twist-bend parameters $\kappa$ is \emph{oriented consistently} with $A$.  

To obtain conjugacy-invariants to quantify the twist-bend parameters, we define the following numerical objects corresponding to $\kappa$:
$$\tilde \X_1(\kappa)=\X(a_A, r_A, a_B, K(r_C)),~ \tilde \X_2(\kappa)=\X(a_A, K(r_C), a_B, r_A),~\tilde \X_3(\kappa)=\X(r_A, K(r_C), a_B, a_A);$$
$$\tilde \A_1(\kappa)=\A(a_A, r_A, K(r_C)), ~~\tilde \A_3(\kappa)=\A(r_A, K(r_C), a_B).$$
\begin{lemma}\label{tw1} 

Let $A$, $B$, $C$ be loxodromic transformations of $\h^3$ such that $\langle A, B \rangle$ and $\langle A^{-1}, C \rangle$ are non-singular $(0, 3)$ subgroups of $\Sp(3,1)$.  We further assume that $a_B$, $r_C$ do not lie on a proper totally geodesic subspace joining $a_A$ and $r_A$. Let $K=E_K(t, \psi, \xi_1, \xi_2, k_1, k_2, k_3)$ and $K'=E_{K'}(t', \psi', \xi'_1, \xi'_2, k_1', k_2', k_3')$ represent twist-bend parameters that are oriented consistently with $A$. If
$$[\tilde \X_1(\kappa)]=[\tilde \X_1(\kappa')], ~~[\tilde \X_2(\kappa)]=[\tilde \X_2(\kappa')], ~~[\tilde \X_3(\kappa)]=[\tilde \X_3(\kappa')]; $$
$$\tilde \A_1(\kappa)=\tilde \A_1(\kappa'), ~\tilde \A_3(\kappa)=\tilde \A_3(\kappa'); $$
and $k_1=k_1'$, $k_2=k_2'$, $k_3=k_3'$, 
then $K=K'$.
\end{lemma}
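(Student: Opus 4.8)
The plan is to recover $K$ from the listed invariants in two stages: first pin down the image point $K(r_C)$ on $\partial\h^3$ using the cross-ratio and angular data together with the non-singularity hypothesis, and then use the commutation of $K$ with $A$ to recover $K$ itself from its action on this single point plus its projective points. The key observation is that the five invariants $\tilde\X_1,\tilde\X_2,\tilde\X_3,\tilde\A_1,\tilde\A_3$ are precisely the invariants attached to the ordered quadruple $(a_A, r_A, a_B, K(r_C))$ of boundary points, so \thmref{cao} applies directly to the two quadruples $(a_A, r_A, a_B, K(r_C))$ and $(a_A, r_A, a_B, K'(r_C))$.

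First I would observe that since $K$ and $K'$ are both oriented consistently with $A$, they fix the same points $a_A, r_A$ and act identically on the first three slots of the quadruple, so we may normalise so that the first three boundary points coincide. Applying \thmref{cao} to the two quadruples, the equality of the similarity classes $[\tilde\X_j(\kappa)]=[\tilde\X_j(\kappa')]$ for $j=1,2,3$ and of the angular invariants $\tilde\A_1,\tilde\A_3$ yields an isometry $h\in\Sp(3,1)$ with $h(a_A)=a_A$, $h(r_A)=r_A$, $h(a_B)=a_B$ and $h(K(r_C))=K'(r_C)$. The non-singularity hypothesis — that $a_B$ and $r_C$ do not lie on a proper totally geodesic subspace joining $a_A$ and $r_A$ — is what forces this $h$ to be essentially trivial on the relevant configuration, so that in fact $K(r_C)=K'(r_C)$. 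This step mirrors the use of irreducibility in \lemref{csprp1}, where the polar-vector argument ruled out the degenerate alternative.

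Next I would use the commutation relation. Both $K$ and $K'$ commute with $A$, so as noted in the text each is of the form $E(t,\psi,\xi_1,\xi_2)$ in the frame diagonalising $A$; in particular they share the eigenspace structure of $A$, so their projective fixed points are determined by those of $A$. Under the hypothesis $k_i = k_i'$ the projective points agree, and once we know $K(r_C)=K'(r_C)$ the real-trace data $(t,\psi,\xi_1,\xi_2)$ — encoded in $\tilde\X_j$ and $\tilde\A_k$ through the position of the single moving point $K(r_C)$ relative to the fixed frame — is pinned down as well. I would then invoke \lemref{lox}: two regular loxodromics (or, in the boundary-elliptic case $t=1$, two commuting elements with prescribed eigenspaces) that share the same projective fixed points, the same real trace, and the same projective points must be equal. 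Applying this to $K$ and $K'$ gives $K=K'$.

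The main obstacle I anticipate is the second half of the argument when $t=1$, where $K$ is boundary elliptic rather than loxodromic and the eigenvalue $[e^{i\psi}]$ has multiplicity two; here \lemref{lox} does not apply verbatim, and one must check that the projective-point data still suffices to separate elements on the higher-dimensional eigenspace, exactly the subtlety the text flags when it says ``the projective points for these eigenvalues can be defined as before.'' A secondary difficulty is making precise that the single boundary point $K(r_C)$, together with the fixed frame of $A$, genuinely recovers all four real-trace parameters of $K$; this requires that the map sending $K$ (constrained to commute with $A$) to the configuration $(a_A, r_A, a_B, K(r_C))$ be injective, which is again where the non-degeneracy of the relative position of $a_B$ and $r_C$ is essential.
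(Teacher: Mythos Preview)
Your plan is correct and matches the paper's approach closely: apply \thmref{cao} to the quadruples $(a_A,r_A,a_B,K(r_C))$ and $(a_A,r_A,a_B,K'(r_C))$ (noting, as the paper does, that the missing invariant $\tilde\A_2=\A(a_A,r_A,a_B)$ is independent of $K$ and hence trivially equal for $\kappa$ and $\kappa'$), then use the hypothesis on $a_B$ to force the resulting isometry to be $\pm I$, giving $K(r_C)=K'(r_C)$.

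The one place where the paper is more concrete than your sketch is precisely your ``secondary difficulty.'' Rather than arguing abstractly that the map $K\mapsto K(r_C)$ is injective on the centraliser of $A$, the paper simply observes that $E_K E_{K'}^{-1}$ fixes the three boundary points $a_A$, $r_A$, $r_C$, hence is again of the diagonal form whose fixed set bounds a proper totally geodesic subspace; the hypothesis that $r_C$ does not lie on such a subspace then forces $E_K E_{K'}^{-1}=\pm I$, so $E_K=E_{K'}$. This is the same fixed-point argument used a moment earlier for $a_B$, applied a second time to $r_C$, and it delivers equality of the real-trace data directly without having to analyse the encoding of $(t,\psi,\xi_1,\xi_2)$ in the cross ratios. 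After that, \lemref{lox} finishes exactly as you describe: same projective fixed points (inherited from $A$), same real trace, same projective points (hypothesis), hence $K=K'$. Your flagged obstacle about the boundary-elliptic case $t=1$ is not addressed separately in the paper either; the argument via $E_K E_{K'}^{-1}$ goes through uniformly.
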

\begin{proof}
Without loss of generality we assume $a_A=o$, $r_A=\infty$.
In view of the conditions
$$[\tilde \X_1(\kappa)]=[\tilde \X_1(\kappa')], ~~[\tilde \X_2(\kappa)]=[\tilde \X_2(\kappa')],~~[\tilde \X_3(\kappa)]=[\tilde \X_3(\kappa')], \hbox{ and }$$
$$\tilde \A_1(\kappa)=\tilde \A_1(\kappa'), ~\tilde \A_3(\kappa)=\tilde \A_3(\kappa'), $$ and noting that  $\tilde \A_2(\kappa)$ and $\tilde \A_2(\kappa')$ are trivially equal,  following similar arguments as in the proof of \cite[Theorem 5.2]{cao}, we have $f$  in $\Sp(3,1)$ such that $f(a_A)=a_A$, $f(r_A)=r_A$, $f(a_B)=a_B$ and $f(E_K(r_C))=E_{K'}(r_C)$. Since $f$ fixes three points on the boundary, it must be of the form  
$$f=\begin{bmatrix} \mu_o & 0 & 0 & 0  \\ 0 & \mu_o & 0 & 0 \\0 & 0 & \mu_1 & 0 \\ 0 & 0 & 0 & \mu_2 \end{bmatrix}.$$
The boundary fixed point set of such a transformation always bounds a proper totally geodesic subspace of $\h^3$.  Since $a_B$ does not lie on a proper totally geodesic subspace joining $a_A$ and $r_A$, we must have $f=\pm I$. Thus, it follows that $E_{K}(r_C)= E_{K'}(r_C)$. Now by using the fact that $E_K E_{K'}^{-1}$ has the three fixed points $a_A= o, r_A=\infty $ and $r_C$ together with the condition that $r_C$ does not lie on a totally geodesic subspace joining $a_A$ and $r_A$, we have $E_K =E_{K'}$. 

 Hence, $K$ and $K'$ are conjugate with the same attracting and the same repelling points. So, by \lemref{lox}, $K=K'$ if and only if they have the same projective points and  the same fixed points. This completes the proof.
\end{proof}
\subsection{Proof of \thmref{mth2}} After we have \thmref{qc2} and \lemref{tw1}, the proof of \thmref{mth2} follows by mimicking the arguments in \cite{pp} or \cite{gk1}. We sketch it here. 

Let $\Sigma_g \setminus \mathcal C$ be the complement of the curve system $\mathcal C$ in $\Sigma_g$. This is a disjoint union of $2g-2$ three holed spheres. Each of the three-holed sphere corresponds to a  non-singular $(0, 3)$ subgroup of ${\Sp}(3,1)$. By \corref{qcc2}, a $(0, 3)$ subgroup $\langle A, B \rangle$ is determined up to conjugacy by the $36$ real parameters.  While attaching two three-holed spheres, we attach two $(0, 3)$ groups subject to the compatibility condition that a peripheral element in one group is conjugate to the inverse of a peripheral element in the other group. This gives a $(0, 4)$ group that can be seen to be determined by $72$ real parameters. Proceeding this way, attaching  $2g-2$ of the above $(0, 3)$ groups, we get a surface with $2g$ handles, and it is determined by  $36(2g-2)=72g-72$ real parameters obtained from the attaching process. The handles correspond to the $g$ curves that in turn correspond to the two boundary components of the  three-holed spheres. Now,  there are $g$ quaternionic constraints that are  imposed to close these handles: one of the peripheral elements of each of these $(0,3)$ groups  must be conjugate to the inverse of the other peripheral element. Note that, corresponding to each peripheral element there are $10$ natural real parameters: the real trace and two projective points. So, the number of real parameters reduces to $72g-72-10g=62g-72$. But there are $g$ twist-bend parameters $\kappa_i= (s_i,\psi_i,\xi_{i_1}, \xi_{i_2}, k_{1i}, k_{2i}, k_{3i})$, one for each handle,  and each contributes $10$ real parameters. Thus, we need  $62g-72 + 10g=72g-72$ real parameters to determine $\rho$ up to conjugacy.

This proves the theorem.

\bigskip 

\begin{acknowledgement}
John Parker suggested us to use flags in the definition of the weakly non-singular pairs. We thank him for comments and  suggestions.  
\end{acknowledgement}


\end{document}